\newtheorem{theorem}{Theorem}
\newtheorem*{theorem*}{Theorem}
\newtheorem{lemma}{Lemma}
\newtheorem{proposition}{Proposition}
\newtheorem*{proposition*}{Proposition}
\newcommand{\Mk}{\mathcal{M}_k}
\newcommand{\Mkk}{\mathcal{M}_{k-1}}
\newcommand{\Hk}{\mathcal{H}_k}
\newcommand{\Rm}{\mathbb{R}^m}
\newcommand{\C}{\mathbb{C}}
\newcommand{\Clm}{\mathcal{C}l_m}
\newcommand{\Sm}{\mathbb{S}^{m-1}}
\newcommand{\Eu}{\mathbb{E}_u}
\newcommand{\udx}{\langle u,D_x\rangle}
\newcommand{\dudx}{\langle D_u,D_x\rangle}
\newcommand{\Dtwo}{\mathcal{D}_2}
\newcommand{\be}{\begin{eqnarray*}}
\newcommand{\ee}{\end{eqnarray*}}
\begin{document}

\title{Some Properties of the Higher Spin Laplace Operator}

\author{Chao Ding$^1$\thanks{Electronic address:  {\tt dchao@uark.edu}.} and John Ryan$^1$\thanks{Electronic address: {\tt jryan@uark.edu.}} \\
\emph{\small $^1$Department of Mathematics, University of Arkansas, Fayetteville, AR 72701, USA} }
%\emph{\small $^2$Department of Physics, University of Arkansas, Fayetteville, AR 72701, USA}}
\date{}

\maketitle

\begin{abstract}
The higher spin Laplace operator has been constructed recently as the generalization of the Laplacian in higher spin theory. This acts on functions taking values in arbitrary irreducible representations of the Spin group. In this paper, we first provide a decomposition of the higher spin Laplace operator in terms of Rrita-Schwinger operators. With such a decomposition, a connection between the fundamental solutions for the higher spin Laplace operator and the fundamental solutions for the Rarita-Schwinger operators is provided. Further, we show that the two components in this decomposition are conformally invariant differential operators. An alternative proof for the conformally invariance property is also pointed out, which can be connected to Knapp-Stein intertwining operators. Last but not least, we establish a Borel-Pompeiu type formula for the higher spin Laplace operator. As an application, we give a Green type integral formula.
\end{abstract}
{\bf Keywords:}\quad Rarita-Schwinger operators, Higher spin Laplace operator, Stokes' Theorem, Conformally invariant, Borel-Pompeiu type formula, Green type integral formula, Knapp-Stein intertwining opreator.\\
{\bf AMS subject classification:}\quad Primary 30Gxx, 42Bxx, 46F12, 53Bxx, 58Jxx.
%%%%%%%%%%%%% Introduction %%%%%%%%%%%%%%
\section{Introduction}
Classical Clifford analysis (see \cite{Brackx,Belgians}) is the study of Dirac type operators and theory of monogenic functions (null solutions of the Dirac operator). It is also well known that (\cite{Brackx,Belgians}) solutions for the Dirac equation, $D_xf(x)=0$, in $m$-dimensional Euclidean space $\Rm$ can be described by a Cauchy integral formula. Here, $D_x$ stands for the Euclidean Dirac operator $\sum_{i=1}^me_i\frac{\partial}{\partial_{x_i}}$, and the $e_i$'s are the generators of a real Clifford algebra $\Clm$. As $D_x^2=-\Delta_x$, the negative Laplacian over $\Rm$, it has been shown that (\cite{JohnCauchy}) Green's formula for harmonic functions can be modified via Clifford algebras to more closely resemble a Cauchy integral formula. Further, many authors have also studied Cauchy integral formulas for modified Dirac equations. For instance, in \cite{Xu}, Xu studied solutions to the inhomogeneous Dirac equation $(D_x+\lambda)f(x)=0$, with $\lambda\in\mathbb{C}$, which also possess a Cauchy integral formula. In \cite{R1}, Ryan provided Cauchy kernels and Cauchy-Green type integral formulas for solutions to each polynomial equation $(D_x^n+\sum_{k=1}^{n-1}b_kD_x^k)f(x)=0,$ with $b_k\in\mathbb{C}$. \\
\par
In the past few decades, many authors (\cite{B1,Bures,Ding0,Ding3,D,Eelbode}) have been working on generalizations of classical Clifford analysis to the so called higher spin theory. This investigates higher spin differential operators acting on functions on $\Rm$, taking values in arbitrary irreducible representations of the Spin group. In Clifford analysis, these irreducible representations are traditionally constructed as homogeneous polynomial spaces satisfying certain differential equations, see \cite{Gilbert} for more details. The first order conformally invariant differential operator, named as a Rarita-Schwinger operator, was first studied systematically in \cite{Bures} and revisited in \cite{D} with a different approach. In both papers, fundamental solutions and integral formulas, such as Borel-Pompeiu formula and Cauchy integral formula, are established. Stein and Weiss (\cite{ES}) introduced Stein Weiss type gradients (also called Stein Weiss operators) as projections of the gradient operator with the language of representation theory. It turns out that the Dirac operator and the Rarita-Schwinger operator can both be reconstructed as Stein Weiss type gradients, see \cite{Ding0,SR,ES} for more details. In \cite{B1,Eelbode}, the second order conformally invariant differential operator, named as a generalized Maxwell operator or the higher spin Laplace operator, and its fundamental solution were discovered. In \cite{Ding3}, the authors completed the construction of arbitrary order  conformally invariant differential operators in higher spin spaces as well as their fundamental solutions.\\
\par
However, we have not found any results of integral formulas for these higher spin conformally invariant differential operators, except for the Rarita-Schwinger operator. In this paper, our purpose is to establish a Borel-Pompeiu type formula for the higher spin Laplace operator. The technique used here is motivated by the proof of the Green integral formula for harmonic functions via Clifford analysis. This suggests us to find the connection between the higher spin Laplace operator and the Rarita-Schwinger operators. Then we can investigate a Borel-Pompeiu type formula for the higher spin Laplace operator with the help of Stokes' theorem for the Rarita-Schwinger operators (\cite{Bures,Ding0}). A Green type integral formula is provided at the end of this paper.\\
\par
This paper is organized as follows. In Section 2, we introduce the notion of Clifford analysis and some of the standard facts in higher spin theory. For instance, we review Rairta-Schwinger operators and the higher spin Laplace operator with their fundamental solutions. Further, intertwining operators for the higher spin Laplace operator are also provided for later use. Section 3 is devoted to a brief summary of Stokes' theorem for the Rarita-Schwinger operators. This is one of the main tools for establishing integral formulas for the higher spin Laplace operator. Section 4 establishes the relation between the higher spin Laplace operator with the Rarita-Schwinger operators, this allows us to apply Stokes' theorem for the Rarita-Schwinger operators in the later argument. In Section 5, we also point out the connection between the fundamental solution of the higher spin Laplace operator and the fundamental solutions of two Rarita-Schwinger type operators, which turn out to be critical in the argument for the Borel-Pompeiu type formula in Section 7. Section 6 demonstrates that the decomposition for the higher spin Laplace operator, which is obtained in Section 4, provides us two second order conformally invariant differential operators. A straightforward proof is provided, and we also point out that there is an alternative proof. This needs the help of convolution type operators and the fundamental solutions for the higher spin Laplace operator. Similar argument can be found in Section $4.1$ in \cite{Ding3}. Such convolution type operators can also be realized as Knapp-Stein intertwining operators with the principal series representations induced by the polynomial representations of the Spin group. However, this is beyond the scope of this paper. See \cite{Clerc,KS} for more details.\\
\par
 With all the preparations that have been made, we establish a Borel-Pompeiu type formula for the higher spin Laplace operator in Section 7. As an application, a Green type integral formula is stated at the end.

%%%%%%%%%%%         Preliminaries       %%%%%%%%%%%%%%%%%%%
\section{Preliminaries}
Let $\{e_1,e_2,\cdots,e_m\}$ be an orthonormal basis for the $m$-dimensional Euclidean space $\Rm$. The real Clifford algebra is generated by these basis elements with the defining relations $$e_i e_j + e_j e_i= -2\delta_{ij},\ 1\leq i,\ j\leq m,
$$ where $\delta_{ij}$ is the Kronecker delta function. An arbitrary element of the basis of the Clifford algebra can be written as $e_A=e_{j_1}\cdots e_{j_r},$ where $A=\{j_1, \cdots, j_r\}\subset \{1, 2, \cdots, m\}$ and $1\leq j_1< j_2 < \cdots < j_r \leq m.$
Hence for any element $a\in \mathcal{C}l_m$, we have $a=\sum_Aa_Ae_A,$ where $a_A\in \mathbb{R}$. The complex Clifford algebra $\Clm (\C)$ is defined as the complexification of the real Clifford algebra
$$\Clm (\C)=\Clm\otimes_{\mathbb{R}}\C.$$
We consider real Clifford algebra $\Clm$ throughout this subsection, but in the rest of the paper we consider the complex Clifford algebra $\Clm (\mathbb{C})$ unless otherwise specified. For $a=\sum_Aa_Ae_A\in\Clm$, we will need the following anti-involutions:
\begin{itemize}
\item \textbf{Reversion:}\\
\begin{eqnarray*}
\tilde{a}=\sum_{A}(-1)^{|A|(|A|-1)/2}a_Ae_A,
\end{eqnarray*}
where $|A|$ is the cardinality of $A$. In particular, $\widetilde{e_{j_1}\cdots e_{j_r}}=e_{j_r}\cdots e_{j_1}$. Also $\tilde{ab}=\tilde{b}\tilde{a}$ for $a, b\in\Clm.$
\item \textbf{Clifford conjugation:}\\
\begin{eqnarray*}
\bar{a}=\sum_{A}(-1)^{|A|(|A|+1)/2}a_Ae_A,
\end{eqnarray*}
satisfying $\overline{e_{j_1}\cdots e_{j_r}}=(-1)^re_{j_r}\cdots e_{j_1}$. Also $\overline{ab}=\bar{b}\bar{a}$ for $a, b\in\Clm$.
\end{itemize}
The Pin and Spin groups play an important role in Clifford analysis. The Pin group can be defined as $$Pin(m)=\{a\in \mathcal{C}l_m: a=y_1y_2\dots y_{p},\ y_1,\dots,y_{p}\in\mathbb{S}^{m-1},p\in\mathbb{N}\},$$ 
where $\mathbb{S} ^{m-1}$ is the unit sphere in $\Rm$. $Pin(m)$ is clearly a multiplicative group in $\mathcal{C}l_m$. \\
\par
Now suppose $a\in \mathbb{S}^{m-1}\subseteq \mathbb{R}^m$. If we consider $axa$, we may decompose
$$x=x_{a\parallel}+x_{a\perp},$$
where $x_{a\parallel}$ is the projection of $x$ onto $a$ and $x_{a\perp}$ is the remainder part perpendicular to $a$. Hence $x_{a\parallel}$ is a scalar multiple of $a$ and we have
$$axa=ax_{a\parallel}a+ax_{a\perp}a=-x_{a\parallel}+x_{a\perp}.$$
So the action $axa$ describes a reflection of $x$ in the direction of $a$. By the Cartan-Dieudonn$\acute{e}$ Theorem each $O\in O(m)$ is the composition of a finite number of reflections. If $a=y_1\cdots y_p\in Pin(m),$ we have $\tilde{a}:=y_p\cdots y_1$ and observe $ax\tilde{a}=O_a(x)$ for some $O_a\in O(m)$. Choosing $y_1,\ \dots,\ y_p$ arbitrarily in $\mathbb{S}^{m-1}$, we have the group homomorphism
\begin{eqnarray*}
\theta:\ Pin(m)\longrightarrow O(m)\ :\ a\mapsto O_a,
\end{eqnarray*}
with $a=y_1\cdots y_p$ and $O_ax=ax\tilde{a}$ is surjective. Further $-ax(-\tilde{a})=ax\tilde{a}$, so $1,\ -1\in Ker(\theta)$. In fact $Ker(\theta)=\{1,\ -1\}$. See \cite{P1}. The Spin group is defined as
$$Spin(m)=\{a\in \mathcal{C}l_m: a=y_1y_2\dots y_{2p},\ y_1,\dots,y_{2p}\in\mathbb{S}^{m-1},p\in\mathbb{N}\}$$
 and it is a subgroup of $Pin(m)$. There is a group homomorphism
\begin{eqnarray*}
\theta:\ Spin(m)\longrightarrow SO(m)
\end{eqnarray*}
that is surjective with kernel $\{1,\ -1\}$ and defined by the above group homomorphism for $Pin(m)$. Thus $Spin(m)$ is the double cover of $SO(m)$. See \cite{P1} for more details.\\
\par
The Dirac operator in $\mathbb{R}^m$ is defined to be $$D_x:=\sum_{i=1}^{m}e_i\partial_{x_i}.$$  Note $D_x^2=-\Delta_x$, where $\Delta_x$ is the Laplacian in $\mathbb{R}^m$.  A $\Clm$-valued function $f(x)$ defined on a domain $U$ in $\Rm$ is left monogenic if $D_xf(x)=0.$ Sometimes, we will consider the Dirac operator $D_u$ in a vector $u$ rather than $x$.\\ %Since Clifford multiplication is not commutative in general, there is a similar definition for right monogenic functions. 
\par 
Let $\mathcal{M}_k$ denote the space of $\mathcal{C}l_m$-valued monogenic polynomials homogeneous of degree $k$. Note that if $h_k(u)\in\Hk$, the space of complex valued harmonic polynomials homogeneous of degree $k$, then $D_uh_k(u)\in\mathcal{M}_{k-1}$, but $D_uup_{k-1}(u)=(-m-2k+2)p_{k-1}u,$ where $p_{k-1}(u)\in \Mkk$. Hence,
\begin{eqnarray}\label{Almansi}
\mathcal{H}_k=\mathcal{M}_k\oplus u\mathcal{M}_{k-1},\ h_k=p_k+up_{k-1}.
\end{eqnarray}
This is an \emph{Almansi-Fischer decomposition} of $\Hk$ \cite{D}. In this Almansi-Fischer decomposition, we have $P_k^+$ and $P_k^-$ as the projection maps 
\begin{eqnarray}
&&P_k^+=1+\frac{uD_u}{m+2k-2}:\ \mathcal{H}_k\longrightarrow \mathcal{M}_k, \label{Pk+}\\
&&P_k^-=I-P_k^+=\frac{-uD_u}{m+2k-2}:\ \mathcal{H}_k\longrightarrow u\mathcal{M}_{k-1} \label{Pk-}.
\end{eqnarray}
Suppose $U$ is a domain in $\mathbb{R}^m$. Consider a differentiable function $f: U\times \mathbb{R}^m\longrightarrow \mathcal{C}l_m$
such that, for each $x\in U$, $f(x,u)$ is a left monogenic polynomial homogeneous of degree $k$ in $u$. Then \textbf{the Rarita-Schwinger operator} \cite{Bures,D} is defined by 
 $$R_k=P_k^+D_x:\ C^{\infty}(\Rm,\Mk)\longrightarrow C^{\infty}(\Rm,\Mk).$$
 We also need the following three more Rarita-Schwinger type operators.
 \begin{eqnarray*}
	&&\text{\textbf{The twistor operator:}}\ T_k=P_k^+D_x:\ C^{\infty}(\Rm,u\Mkk)\longrightarrow C^{\infty}(\Rm,\Mk),\\
	&&\text{\textbf{The dual twistor operator:}}\ T_k^*=P_k^-D_x:\ C^{\infty}(\Rm,\Mk)\longrightarrow C^{\infty}(\Rm,u\Mkk),\\
	&&\text{\textbf{The remaining operator:}}\ Q_k=P_k^-D_x:\ C^{\infty}(\Rm,u\Mkk)\longrightarrow C^{\infty}(\Rm,u\Mkk).
	\end{eqnarray*}
	More details can be found in \cite{Bures,D}. Let $Z_k^1(u,v)$ be the reproducing kernel for $\Mk$, which satisfies
\begin{eqnarray*}
f(v)=\int\displaylimits_{\Sm}\overline{Z_k^1(u,v)}f(u)dS(u),\ for\ all\ f(v)\in\Mk.
\end{eqnarray*}
Then the fundamental solution for $R_k$ (\cite{D}) is 
\begin{eqnarray*}
E_k(x,u,v)=\frac{1}{\omega_{m}a_k}\frac{x}{||x||^m}Z_k^1(\frac{xux}{||x||^2},v),
\end{eqnarray*}
where the constant $a_k$ is $\displaystyle\frac{m-2}{m+2k-2}$ and $\omega_{m}$ is the area of the $m$-dimensional unit sphere. Similarly, we have the fundamental solution for $Q_k$ (\cite{Li}) as follows.
\begin{eqnarray*}
F_k(x,u,v)=\frac{-1}{\omega_{m}a_k}u\frac{x}{||x||^m}Z_{k-1}^1(\frac{xux}{||x||^2},v)v.
\end{eqnarray*}
The higher spin Laplace operator is constructed in \cite{B1}, and is defined as follows.
\begin{eqnarray*}
\Dtwo=\Delta_x-\displaystyle\frac{4\udx\dudx}{m+2k-2}+\displaystyle\frac{4||u||^2\dudx^2}{(m+2k-2)(m+2k-4)}.
\end{eqnarray*}
where $\langle\ ,\ \rangle$ is the standard inner product in Euclidean space. The fundamental solution for $\Dtwo$ is also provided in the same reference. We denote it by
\begin{eqnarray*}
H_k(x,u,v)=\displaystyle\frac{(m+2k-4)\Gamma(\displaystyle\frac{m}{2}-1)}{4(4-m)\pi^{\frac{m}{2}}}||x||^{2-m}Z_k^2(\displaystyle\frac{xux}{||x||^2},v),
\end{eqnarray*}
where $Z_k^2(u,v)$ is the reproducing kernel for $\Hk$ and satisfies
\begin{eqnarray*}
f(v)=\int\displaylimits_{\Sm}\overline{Z_k^2(u,v)}f(u)dS(u),\ for\ all\ f(v)\in\Hk.
\end{eqnarray*} 
The result of intertwining operators for $\Dtwo$ is stated as follows for later use.
\begin{theorem}\cite{B1,Ding3}\label{interDtwo}
Let $f(x,u)\in C^{\infty}(\Rm,\Hk)$, $x'=\varphi(x)=(ax+b)(cx+d)^{-1}$ is a M\"{o}bius transformation and $u'=\displaystyle\frac{(cx+d)u(\widetilde{cx+d})}{||cx+d||^2}$. Then we have
\begin{eqnarray*}
&&J_{-2}^{-1}\mathcal{D}_{2,x,u}J_2(\varphi,x)f(\varphi(x),\frac{(cx+d)u(\widetilde{cx+d})}{||cx+d||^2})=\mathcal{D}_{2,x',u'}f(x',u').
\end{eqnarray*}
where $J_{-2}(\varphi,x)=||cx+d||^{-m-2}$ and $J_{2}(\varphi,x)=||cx+d||^{2-m}$ are called intertwining operators for $\Dtwo$. The lower index in $\mathcal{D}_{2,x,u}$ stands for the variables of the operator $\Dtwo$.
\end{theorem}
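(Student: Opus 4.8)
The plan is to reduce the claimed identity to the generators of the M\"obius group and then to carry out one genuine computation, for the inversion. Recall (Ahlfors--Vahlen) that every M\"obius transformation $\varphi(x)=(ax+b)(cx+d)^{-1}$ of $\Rm\cup\{\infty\}$ is a finite composition of translations $x\mapsto x+b$, dilations $x\mapsto\lambda x$ with $\lambda>0$, orthogonal transformations $x\mapsto axa^{-1}$ with $a\in\mathrm{Pin}(m)$, and the single inversion $x\mapsto x^{-1}$; moreover the automorphy factor obeys the cocycle rule $\|c_{\varphi_2\circ\varphi_1}x+d_{\varphi_2\circ\varphi_1}\|=\|c_{\varphi_2}\varphi_1(x)+d_{\varphi_2}\|\,\|c_{\varphi_1}x+d_{\varphi_1}\|$, so that $J_{\pm2}(\varphi_2\circ\varphi_1,x)=J_{\pm2}(\varphi_2,\varphi_1(x))\,J_{\pm2}(\varphi_1,x)$ and the $u$-substitution composes in the same way. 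Hence, if the intertwining identity holds for $\varphi_1$ and for $\varphi_2$, it holds for $\varphi_2\circ\varphi_1$, and it suffices to check it on the four types of generators.

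For translations and orthogonal transformations $\|cx+d\|\equiv1$, so $J_{\pm2}\equiv1$. Translations commute with $\Dtwo$ because $\Dtwo$ has constant coefficients in $x$. For $x\mapsto axa^{-1}$, the companion substitution $u\mapsto\dfrac{(cx+d)u(\widetilde{cx+d})}{\|cx+d\|^2}$ is exactly $u\mapsto\theta(a)u$ with $\theta$ the covering homomorphism of the excerpt, and each constituent $\Delta_x$, $\udx$, $\dudx$, $\|u\|^2$ of $\Dtwo$ is invariant under the simultaneous rotation of $x$ and $u$; hence $\Dtwo$ commutes with this action. For a dilation $x\mapsto\lambda x$ one gets $u'=u$ and $\|cx+d\|^{-2}=\lambda$; since $\Dtwo$ is homogeneous of degree $2$ in the $\partial_{x_j}$, both sides of the identity acquire the factor $\lambda^{(m+2)/2}=\lambda^{(m-2)/2}\lambda^2$, which is precisely why the conformal weights are $2-m$ on the left and $-2-m$ on the right -- the same weights as for the ordinary Laplacian, whose symbol is the leading symbol of $\Dtwo$.

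It remains to treat the inversion $\varphi(x)=x^{-1}$, which is the heart of the matter. Here $cx+d=x$, $\widetilde{cx+d}=x$, and $u':=\dfrac{xux}{\|x\|^2}=u-\dfrac{2\langle x,u\rangle}{\|x\|^2}x=:\mathcal{O}_xu$, the reflection of $u$ in the hyperplane $x^{\perp}$; since $\mathcal{O}_x$ is orthogonal, $g(x,\cdot)\in\Hk$ implies $g(x,\mathcal{O}_x\,\cdot)\in\Hk$, so Euler's relation $\langle u,D_u\rangle g=kg$ and $\Delta_u g=0$ remain available throughout. One must show
\begin{equation*}
\Dtwo\Big(\|x\|^{2-m}\,g\big(x^{-1},\mathcal{O}_xu\big)\Big)=\|x\|^{-m-2}\,(\Dtwo g)\big(x^{-1},\mathcal{O}_xu\big).
\end{equation*}
I would do this by the chain rule in three stages: (i) the leading term, using the classical Kelvin identity $\Delta_x\big(\|x\|^{2-m}h(x^{-1})\big)=\|x\|^{-m-2}(\Delta h)(x^{-1})$ and tracking the extra contributions produced by differentiating $\mathcal{O}_xu$ in $x$ (these feed into terms of the shape $\udx$ and $\dudx$); (ii) the transformation of $\udx\,\dudx$ and of $\|u\|^2\dudx^2$ under the same substitution; (iii) collecting all cross terms and verifying that the non-invariant pieces cancel thanks to the specific coefficients $\dfrac{4}{m+2k-2}$ and $\dfrac{4}{(m+2k-2)(m+2k-4)}$ in $\Dtwo$, the harmonicity and $k$-homogeneity of $g$ in $u$ being used to discard the residual terms. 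Equivalently, and with less bookkeeping, one may pass to the infinitesimal picture: $\Dtwo$ visibly commutes with the momentum operators, the rotations and the Euler operator of the conformal algebra $\mathfrak{so}(m+1,1)$ acting on $\Hk$-valued functions of the appropriate weight, and the statement reduces to the single bracket computation $[\Dtwo,K_j]=0$ for the special conformal generators $K_j$ (which carry the weight term together with the $\mathcal{O}_x$-rotation term).

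The main obstacle is precisely stage (iii), or the bracket $[\Dtwo,K_j]$: the interaction of the Kelvin transform in $x$ with the $x$-dependent orthogonal twist $\mathcal{O}_x$ of $u$ generates a long list of terms, and the required cancellations are rigid -- they are what pin down the coefficients of $\Dtwo$. As a consistency check I would match the identity against the known fundamental solution $H_k(x,u,v)=c_{m,k}\|x\|^{2-m}Z_k^2\big(\tfrac{xux}{\|x\|^2},v\big)$, whose functional form is exactly $J_2(\varphi,x)$ times the inversion-image of the $x$-independent kernel $Z_k^2(\cdot,v)$, so that its covariance mirrors the theorem. An alternative route, indicated in the introduction, deduces the intertwining property of $\Dtwo$ from that of convolution with $H_k$ -- a Knapp--Stein intertwiner between the principal series at weights $\tfrac{m-2}{2}$ and $\tfrac{m+2}{2}$ -- but that presupposes more representation-theoretic machinery, so the direct verification on generators is preferable here.
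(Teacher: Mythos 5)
First, note that the paper does not prove Theorem \ref{interDtwo} at all: it is quoted from \cite{B1,Ding3}, where it is obtained either from the construction of $\Dtwo$ as a conformally invariant operator or (as in Section 4.1 of \cite{Ding3}) from the convolution/Knapp--Stein argument with the fundamental solution. So your proposal is compared against the cited literature rather than an internal argument. Your overall strategy is the standard and legitimate one: the cocycle property of $\|cx+d\|$ and of the substitution $u\mapsto (cx+d)u(\widetilde{cx+d})/\|cx+d\|^{2}$ reduces the claim to the Ahlfors--Vahlen generators, and your treatment of translations, rotations and dilations is correct (the weight bookkeeping $\lambda^{-(m+2)/2}\lambda^{(m-2)/2}\lambda^{2}=1$ does close for dilations, and the scalar-coefficient nature of $\Dtwo$ makes the componentwise rotation argument valid). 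The identification $xux/\|x\|^{2}=u-2\langle u,x\rangle x/\|x\|^{2}$ and the reduction of the inversion case to
$\Dtwo\big(\|x\|^{2-m}g(x^{-1},\mathcal{O}_xu)\big)=\|x\|^{-m-2}(\Dtwo g)(x^{-1},\mathcal{O}_xu)$
are also correct.

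The genuine gap is that this last identity --- which is the entire content of the theorem --- is only announced, not proved. Your ``stage (iii)'' (equivalently the bracket $[\Dtwo,K_j]=0$ with the special conformal generators) is exactly where the specific coefficients $4/(m+2k-2)$ and $4/\big((m+2k-2)(m+2k-4)\big)$ enter, as you yourself observe; for a generic second order operator with the same leading symbol the cancellation fails, so nothing short of actually carrying out the chain-rule computation (tracking the terms produced by differentiating $\mathcal{O}_xu$ in $x$, and using $\Delta_ug=0$ and $\mathbb{E}_ug=kg$ to kill the residue) establishes the claim. The consistency check against $H_k(x,u,v)=c_{m,k}\|x\|^{2-m}Z_k^2(xux/\|x\|^{2},v)$ is suggestive but not probative: covariance of one kernel does not yield the operator identity without the convolution argument being developed in full, which you also do not do. As it stands the proposal is a correct reduction plus a plan for the decisive computation; to count as a proof you must either execute the inversion (or $[\Dtwo,K_j]$) calculation explicitly, or else develop the Knapp--Stein/fundamental-solution route of \cite{Ding3} in detail rather than citing it as an alternative.
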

To conclude this section, we point out that, if $D_uf(u)=0$ then $\bar{f}(u)\bar{D}_u=-\bar{f}(u)D_u=0$. So we can talk of right monogenic functions, right monogenic polynomials with homogeneity of degree $k$, right Almansi-Fischer decomposition for $\Hk$, etc. In other words, for all the results we introduced above, we have their analogues for right monogenic functions. 
%%%%%%%%%%%%%%%%%%%%%%    Stokes theorem    for       RS    type     operators         %%%%%%%%%%%
\section{Stokes' theorem for the Rarita-Schwinger type operators}
For the convenience of the reader, we review Stokes' theorem for the Rarita-Schwinger type toperators as follows. More details can be found in \cite{Bures,Ding0}.

\begin{theorem}[\cite{D}]\textbf{(Stokes' theorem for $R_k$)}\label{StokesRk}\\
Let $\Omega '$ and $\Omega$ be domains in $\mathbb{R}^m$ and suppose the closure of $\Omega$ lies in $\Omega '$. Further suppose the closure of $\Omega$ is compact and $\partial \Omega$ is piecewise smooth. Let $f,g\in C^1(\Omega ',\Mk)$. Then
\newpage
\begin{eqnarray*}
&&\int\displaylimits_{\Omega}\big[(g(x,u)R_k,f(x,u))_u+(g(x,u),R_kf(x,u))_u\big]dx^m\\
=&&\int\displaylimits_{\partial\Omega}(g(x,u),d\sigma_xf(x,u))_u.
\end{eqnarray*}
where $d\sigma_x=n(x)d\sigma(x)$, $d\sigma(x)$ is the area element. $(P(u),Q(u))_u=\int\displaylimits_{\mathbb{S}^{m-1}}P(u)Q(u)dS(u)$ is the inner product for any pair of $\Clm$-valued polynomials.
\end{theorem}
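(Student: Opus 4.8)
The plan is to derive this formula from the classical Cauchy–Pompeiu (Stokes) theorem for the Euclidean Dirac operator, treating the auxiliary variable $u$ as a parameter and then averaging over the sphere $\Sm$. Recall that for $\Clm$-valued functions $F,G\in C^1(\Omega')$,
\begin{eqnarray*}
\int_{\partial\Omega}G(x)\,d\sigma_x\,F(x)=\int_\Omega\big[(G(x)D_x)F(x)+G(x)(D_xF(x))\big]dx^m,
\end{eqnarray*}
where $G(x)D_x:=\sum_{i=1}^m(\partial_{x_i}G)(x)e_i$ is the right action of $D_x$ and $d\sigma_x=n(x)d\sigma(x)$. First I would freeze $u\in\Sm$ and apply this to $F(x)=f(x,u)$ and $G(x)=g(x,u)$; this is legitimate because $f,g\in C^1(\Omega',\Mk)$ and $\overline\Omega\subset\Omega'$ is compact with piecewise smooth boundary. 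Integrating the identity over $u\in\Sm$ against $dS(u)$ and using Fubini's theorem to interchange $\int_{\Sm}$ with $\int_\Omega$ (resp.\ with $\int_{\partial\Omega}$) — permissible since all integrands are continuous on the relevant compact sets — I arrive at
\begin{eqnarray*}
\int_\Omega\int_{\Sm}\big[(g(x,u)D_x)f(x,u)+g(x,u)(D_xf(x,u))\big]dS(u)\,dx^m=\int_{\partial\Omega}(g(x,u),d\sigma_xf(x,u))_u.
\end{eqnarray*}

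It then remains to identify the spherical integrand on the left with $(g(x,u)R_k,f(x,u))_u+(g(x,u),R_kf(x,u))_u$. For each fixed $x$ the polynomial $D_xf(x,\cdot)$ is $\Clm$-valued, homogeneous of degree $k$ and harmonic in $u$ (monogenicity in $u$ survives as harmonicity since $\Delta_u$ commutes with each $e_i$ and with $\partial_{x_i}$), so the projections $P_k^{\pm}$ act on it and $D_xf=P_k^+D_xf+P_k^-D_xf=R_kf+T_k^*f$ with $R_kf(x,\cdot)\in\Mk$ and $T_k^*f(x,\cdot)\in u\Mkk$. Since $g(x,\cdot)\in\Mk$, the orthogonality of the two Almansi–Fischer summands $\Mk$ and $u\Mkk$ with respect to $(\cdot,\cdot)_u$ — equivalently, the fact that pairing against an $\Mk$-valued polynomial detects only the $P_k^+$-component, a standard feature of the Rarita–Schwinger calculus \cite{Bures,D} — gives $(g,T_k^*f)_u=0$, whence $\int_{\Sm}g(D_xf)\,dS(u)=(g,R_kf)_u$. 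Dually, $g(x,\cdot)D_x$ is again $\Clm$-valued, homogeneous of degree $k$ and harmonic in $u$, so the right Almansi–Fischer decomposition splits it as $gD_x=gR_k+gT_k^*$ with $gR_k:=(gD_x)P_k^+$ (right action), the right Rarita–Schwinger operator in the sense of the right-monogenic analogues mentioned at the end of Section~2; the same orthogonality applied on the right against the $\Mk$-valued $f$ yields $\int_{\Sm}(gD_x)f\,dS(u)=(gR_k,f)_u$. Substituting both identities into the displayed equation finishes the proof.

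The only step that is not purely formal is the orthogonality relation $(p,uq)_u=0$ for $p\in\Mk$, $q\in\Mkk$ (together with its right-handed analogue), which is precisely what licenses inserting $P_k^{\pm}$ under $\int_{\Sm}$ and is where the hypothesis that $f$ and $g$ are $\Mk$-valued — i.e.\ homogeneous of degree $k$ in $u$ — enters. This can be established either by Schur's lemma, since $\Mk$ and $u\Mkk\cong\Mkk$ are inequivalent irreducible $Spin(m)$-modules while $(\cdot,\cdot)_u$ is $Spin(m)$-invariant, or by a direct computation on $\Sm$ using $P_k^+=1+uD_u/(m+2k-2)$ and integration by parts; in either case it is already recorded in \cite{Bures,D}. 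Beyond this, the argument invokes only the Dirac Stokes theorem and Fubini, so I anticipate no real difficulty apart from carefully tracking the left and right actions of $D_x$ and $P_k^{\pm}$.
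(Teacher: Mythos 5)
Your overall strategy (freeze $u$, apply the Clifford--Stokes theorem for the Dirac operator $D_x$, integrate over $\Sm$, then insert the Almansi--Fischer projections) is precisely the route of the cited source \cite{D}; the present paper gives no proof of its own. But there is a genuine gap at the one step you yourself single out as nontrivial: the claimed orthogonality $(g,T_k^*f)_u=0$ for $g(x,\cdot)\in\Mk$ is false for the bilinear, non-conjugated pairing $(P,Q)_u=\int_{\Sm}P(u)Q(u)dS(u)$. The Almansi--Fischer summands are orthogonal only in the mixed sense of Lemma \ref{Ortho}: the factor standing on the \emph{left} of the product must be \emph{right} monogenic in $u$. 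Concretely, in $\mathbb{R}^3$ take $g(u)=u_1e_1e_2+u_2\in\mathcal{M}_1$ (left monogenic but not right monogenic) and $u\cdot 1\in u\mathcal{M}_0$; then $\int_{\mathbb{S}^{2}}g(u)\,u\,dS(u)=\tfrac{8\pi}{3}\,e_2\neq 0$. Your Schur-type justification cannot repair this: $(\cdot,\cdot)_u$ is $\Clm$-valued and bilinear, not an invariant scalar inner product, so inequivalence of $\Mk$ and $u\Mkk$ yields no orthogonality --- and the conclusion is in fact false. The error propagates to the statement itself under your reading: with $g(x,u)=u_1e_1e_2+u_2$ (constant in $x$, so any sensible interpretation of $g(x,u)R_k$ vanishes) and $f(x,u)=x_1(u_1e_1e_2+u_2)$, the boundary integral differs from $\int_\Omega[(gR_k,f)_u+(g,R_kf)_u]dx^m$ by the nonzero constant $(g,T_1^*f)_u$ times the volume of $\Omega$; so the formula simply does not hold with $g$ literally left-monogenic-valued, and no argument can close that gap without changing the hypothesis on $g$.

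The missing ingredient is the left/right module bookkeeping: in the intended statement (as in \cite{D}, and as this paper itself uses it in Sections 5 and 7, where the left slots are objects like $H_kA_{k,r}\in C^{\infty}(\Rm,\mathcal{M}_{k,r})$ and $E_k$), the function $g$ in the left slot takes values in the right monogenic polynomials $\mathcal{M}_{k,r}$ --- which is also what makes $g(x,u)R_k$, explicitly declared in the paper to be the \emph{right} Rarita--Schwinger operator, meaningful. With that reading your argument does go through: the component of $gD_x$ in $\mathcal{M}_{k-1,r}u$ under the right Almansi--Fischer decomposition has the form $q'(u)u$ with $q'$ right monogenic of degree $k-1$, so $\int_{\Sm}q'(u)\,u\,f(u)\,dS(u)=0$ is literally Lemma \ref{Ortho} (this half of your identification was already correct); and $(g,T_k^*f)_u=\int_{\Sm}g(u)\,u\,q(u)\,dS(u)$ with $g$ right monogenic of degree $k$ and $q\in\Mkk$ vanishes by the conjugated form of the same lemma, since $\overline{g\,u\,q}=-\bar{q}\,u\,\bar{g}$ with $\bar{q}$ right monogenic of degree $k-1$ and $\bar{g}$ left monogenic of degree $k$. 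Everything else in your proposal --- the Dirac--Stokes identity with $u$ as a parameter, Fubini, and the harmonicity in $u$ of $D_xf$ and $gD_x$ --- is correct and agrees with the standard proof.
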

Comparing with the version in \cite{D}, we shortened the identity by missing $P_k^+$ in the last two equations on purpose. This can also be found in the proof of this theorem in \cite{D}. We also want to clarify that, in the Stokes' theorem above, for $g(x,u)R_k$, the $R_k$ is the right Rarita-Schwinger operator. Since it is positioned on the right hand side, there should be no confusion for this. The reader will see more such terms in the rest of this paper.
\begin{theorem}[\cite{Ding0}]\textbf{(Stokes' Theorem for $T_k$ and $T_k^*$)}\label{StokesTk}\\
Let $\Omega$ and $\Omega '$ be defined as above. Then for $f\in C^1(\Rm,\Mk)$ and $g\in C^1(\Rm,u\Mkk)$, we have
\begin{eqnarray*}
&&\int\displaylimits_{\partial\Omega}\big(g(x,u),d\sigma_xf(x,u)\big)_u\\
&=&\int\displaylimits_{\Omega}\big(g(x,u)T_k,f(x,u)\big)_udx^m+\int\displaylimits_{\Omega}\big(g(x,u),T_k^*f(x,u)\big)_udx^m.
\end{eqnarray*}
\end{theorem}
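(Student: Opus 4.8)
The plan is to imitate the proof of Theorem~\ref{StokesRk}: first apply the classical Stokes--Cauchy--Pompeiu identity for the Euclidean Dirac operator $D_x$ with $u$ treated as a parameter, then integrate over $\mathbb{S}^{m-1}$, and finally collapse the resulting four Dirac terms to two by projecting onto the Almansi--Fischer summands. \textbf{Step 1.} For $C^1$ maps $G,F:\Omega'\to\Clm(\C)$ one has the pointwise identity $\sum_{i=1}^m\partial_{x_i}(Ge_iF)=(GD_x)F+G(D_xF)$, where $GD_x:=\sum_i(\partial_{x_i}G)e_i$; integrating over $\Omega$ and applying the divergence theorem gives
$$\int_\Omega\big[(G(x)D_x)F(x)+G(x)(D_xF(x))\big]\,dx^m=\int_{\partial\Omega}G(x)\,d\sigma_x\,F(x),\qquad d\sigma_x=n(x)d\sigma(x).$$
Taking $G(x)=g(x,u)$, $F(x)=f(x,u)$ for each fixed $u\in\mathbb{S}^{m-1}$, integrating over $u\in\mathbb{S}^{m-1}$ and using Fubini, I obtain
$$\int_{\partial\Omega}\big(g(x,u),d\sigma_xf(x,u)\big)_u=\int_\Omega\Big[\big(g(x,u)D_x,f(x,u)\big)_u+\big(g(x,u),D_xf(x,u)\big)_u\Big]dx^m.$$

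\textbf{Step 2.} Next I would observe that $D_xf$ and $gD_x$ are again $\Hk$-valued. Indeed $D_x$ contains no $u$-derivative, so it preserves homogeneity of degree $k$ in $u$; and since $\Delta_u$ is scalar, $\Delta_u(D_xf)=D_x(\Delta_uf)=0$ for $f$ valued in $\Mk\subset\Hk$ (note that $D_xf$ is in general \emph{not} monogenic in $u$, since $D_u$ and $D_x$ do not commute, which is exactly why a projection is needed). Then by \eqref{Almansi}, \eqref{Pk+}, \eqref{Pk-} and the definitions of $R_k,T_k^*,T_k,Q_k$,
$$D_xf=P_k^+D_xf+P_k^-D_xf=R_kf+T_k^*f,\qquad gD_x=(gD_x)P_k^++(gD_x)P_k^-=gT_k+gQ_k,$$
with $R_kf,\,gT_k$ valued in $\Mk$ and $T_k^*f,\,gQ_k$ valued in $u\Mkk$, where the splitting of $gD_x$ uses the right Almansi--Fischer decomposition recorded at the end of Section~2.

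\textbf{Step 3.} Finally I would use that the Almansi--Fischer decomposition $\Hk=\Mk\oplus u\Mkk$ is orthogonal for the pairing $(\cdot,\cdot)_u$ --- the same fact that allows one to drop $P_k^+$ in Theorem~\ref{StokesRk}. Since $g$ is $u\Mkk$-valued and $f$ is $\Mk$-valued, the cross terms vanish: $(g,D_xf)_u=(g,R_kf)_u+(g,T_k^*f)_u=(g,T_k^*f)_u$ and $(gD_x,f)_u=(gT_k,f)_u+(gQ_k,f)_u=(gT_k,f)_u$. Substituting these into the identity of Step~1 yields exactly
$$\int_{\partial\Omega}\big(g(x,u),d\sigma_xf(x,u)\big)_u=\int_\Omega\big(g(x,u)T_k,f(x,u)\big)_u\,dx^m+\int_\Omega\big(g(x,u),T_k^*f(x,u)\big)_u\,dx^m.$$

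The routine parts are the divergence-theorem bookkeeping, Fubini, and the homogeneity/harmonicity checks in Step~2. The one step with real content is the orthogonality invoked in Step~3: this is precisely what eliminates the $R_k$- and $Q_k$-contributions and leaves only the twistor and dual twistor terms. I would also be careful to apply the left projections $P_k^\pm$ to $D_xf$ but the right-module analogues to $gD_x$, keeping the order of Clifford factors consistent throughout.
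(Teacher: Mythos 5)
Your proposal is correct and follows essentially the same route as the source the paper cites for this theorem: the Clifford--Stokes identity for $D_x$ integrated over $\mathbb{S}^{m-1}$, the splitting $D_xf=R_kf+T_k^*f$ and $gD_x=gT_k+gQ_k$ via the (left and right) Almansi--Fischer decompositions, and the orthogonality of $\Mk$ and $u\Mkk$ under $(\,\cdot\,,\,\cdot\,)_u$ (Lemma \ref{Ortho}) to kill the $R_k$- and $Q_k$-terms. Your closing remark about using the right-module decomposition for $gD_x$ is exactly the care the argument requires, so no gaps remain.
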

\begin{theorem}(\cite{Li})\textbf{(Stokes' theorem for $Q_k$)}\\
Let $\Omega$ and $\Omega '$ be defined as above. Then for $f,g\in C^1(\Rm,u\Mkk)$, we have
\begin{eqnarray*}
&&\int\displaylimits_{\Omega}\big[(g(x,u)Q_k,f(x,u))_u+(g(x,u),Q_kf(x,u))_u\big]dx^m\\
=&&\int\displaylimits_{\partial\Omega}(g(x,u),d\sigma_xf(x,u))_u.
\end{eqnarray*}
\end{theorem}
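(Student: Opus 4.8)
The plan is to imitate the proof of Theorem~\ref{StokesRk} almost verbatim, exploiting the symmetry of the Almansi--Fischer decomposition \eqref{Almansi} under interchange of its two summands: $Q_k=P_k^-D_x$ stands to $u\Mkk$ exactly as $R_k=P_k^+D_x$ stands to $\Mk$. First I would record the pointwise Clifford--Stokes formula for the Dirac operator: for $\Clm$-valued $C^1$ functions $F,G$ on $\Omega'$, summing the Leibniz identity $\partial_{x_i}(G\,e_i\,F)=(\partial_{x_i}G)e_iF+Ge_i(\partial_{x_i}F)$ over $i$ and invoking the divergence theorem gives
\[
\int_{\partial\Omega}G\,d\sigma_x\,F=\int_{\Omega}\big[(GD_x)F+G(D_xF)\big]\,dx^m .
\]
Taking $F=f(x,u)$ and $G=g(x,u)$ (with the anti-involution of the pairing from Section~2 inserted as in Theorem~\ref{StokesRk}) for each fixed $u\in\Sm$, and then integrating over $u\in\Sm$ against $dS(u)$, yields
\[
\int_{\partial\Omega}(g,d\sigma_x f)_u=\int_{\Omega}\big[(gD_x,f)_u+(g,D_xf)_u\big]\,dx^m ,
\]
and the left-hand side is already the boundary term of the asserted identity.

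It remains to replace $D_x$ by $Q_k$ inside the volume integral. For fixed $x$ the values of $f(x,u)$ lie in $u\Mkk\subset\Hk$ and $D_x$ leaves $u$ untouched, so \eqref{Almansi} applies to $D_xf(x,u)$, giving $D_xf=P_k^+D_xf+P_k^-D_xf=T_kf+Q_kf$ with $T_kf$ valued in $\Mk$ and $Q_kf$ valued in $u\Mkk$; using the right Almansi--Fischer decomposition (recalled at the end of Section~2) one writes $gD_x=gT_k+gQ_k$ for the right-sided operators, there being no ambiguity since they stand to the right of $g$, just as with $R_k$ in Theorem~\ref{StokesRk}. Substituting, the volume integrand becomes the sum of $(gT_k,f)_u+(g,T_kf)_u$ and $(gQ_k,f)_u+(g,Q_kf)_u$, so the proof is finished once the two $T_k$-cross-terms are shown to vanish.

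These cross-terms vanish because the Almansi--Fischer summands $\Mk$ and $u\Mkk$ are orthogonal with respect to $(\cdot,\cdot)_u$ (equivalently, $P_k^+$ is symmetric for this pairing), a standard fact used already in \cite{Bures,D}: for $p\in\Mk$ the polynomial $u\,p(u)$ is harmonic and homogeneous of degree $k+1$, since the only possibly nonzero term in $\Delta_u(up)$ is the cross term $2D_up=0$, whereas every component of $q\in\Mkk$ is a harmonic polynomial of degree $k-1$, and solid harmonics of distinct degrees are $L^2(\Sm)$-orthogonal. Since $g(x,\cdot)$ is $u\Mkk$-valued while $T_kf(x,\cdot)$ is $\Mk$-valued, $(g,T_kf)_u=0$ pointwise in $x$, and likewise $(gT_k,f)_u=0$; hence the $T_k$-terms disappear and
\[
\int_{\partial\Omega}(g,d\sigma_x f)_u=\int_{\Omega}\big[(gQ_k,f)_u+(g,Q_kf)_u\big]\,dx^m ,
\]
which is the claim. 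The single point demanding care is the bookkeeping around the right-sided operators and the anti-involution hidden in $(\cdot,\cdot)_u$: one must make sure that decomposing $gD_x$ through the \emph{right} Almansi--Fischer decomposition is genuinely compatible with the pairing and with the left-hand decomposition of $f$. Modulo that, the argument is a direct transcription of the Stokes proofs for $R_k$ and for $T_k,T_k^*$ in \cite{D,Ding0}.
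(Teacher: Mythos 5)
The paper itself does not prove this theorem: it is quoted from \cite{Li} (with the projection $P_k^-$ suppressed in the boundary term), so the only comparison available is with the standard argument there, which is indeed the one you outline — scalar Clifford--Stokes identity, split $D_xf=T_kf+Q_kf$ and $gD_x=gT_k+gQ_k$ through the two Almansi--Fischer decompositions, and discard the twistor cross terms. Your strategy is therefore the right one, but the step in which you kill the cross terms contains a genuine gap: the assertion that the Almansi--Fischer summands $\Mk$ and $u\Mkk$ are orthogonal with respect to $(\cdot,\cdot)_u$ is false as stated, because the pairing $(P,Q)_u=\int_{\Sm}P(u)Q(u)\,dS(u)$ of Section 2 carries no conjugation and Clifford multiplication does not commute. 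Concretely, take $k=1$, $q(u)=ue_1\in u\mathcal{M}_0$ and $p(u)=u_1e_1-u_2e_2\in\Mo$; using $\int_{\Sm}u_iu_j\,dS(u)=\frac{\omega_m}{m}\delta_{ij}$ one finds
\begin{equation*}
(q,p)_u=\int_{\Sm}ue_1\,p(u)\,dS(u)=-\frac{2\omega_m}{m}\,e_1\neq 0,
\end{equation*}
so the two summands, both viewed as left modules, are \emph{not} orthogonal for this bilinear pairing. Note also that your degree count pairs $u\Mk$ (degree $k+1$) against $\Mkk$ (degree $k-1$), whereas the cross terms pair two subspaces of $\Hk$ of the \emph{same} degree $k$; distinct degrees only appear after regrouping.

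What rescues the argument is exactly Lemma \ref{Ortho}: a right monogenic factor, then $u$, then a left monogenic factor integrates to zero over $\Sm$. Equivalently, your harmonicity-plus-degree computation does work, but only after the integrand is grouped so that $u$ sits adjacent to a monogenic factor, namely as $q_{k-1}(u)\,\big(u\,T_kf(x,u)\big)$ for the term $(g,T_kf)_u$ and as $\big((gT_k)(x,u)\,u\big)\,h_{k-1}(u)$ for the term $(gT_k,f)_u$, where $f=uh_{k-1}$. This grouping is available precisely because $g$ must carry the right-module structure $g(x,\cdot)\in\mathcal{M}_{k-1,r}\,u$ (so that $gT_k$ is right monogenic and the $u$ of $g$ stands on its right), which is how \cite{Li} sets things up even though the statement here loosely writes $g\in C^1(\Rm,u\Mkk)$. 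So the ``single point demanding care'' you flag is not mere bookkeeping: with the left placement of $u$ the needed orthogonality actually fails, as the counterexample shows, while with the right-module convention the cross terms vanish by Lemma \ref{Ortho} (or by your degree argument in the correct grouping) and the proof closes. A final minor point: the pairing as defined in this paper has no anti-involution inserted, so nothing is ``hidden'' in $(\cdot,\cdot)_u$ — and that absence is precisely why the left/right placement of $u$ is the crux.
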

We also missed $P_k^-$ in the last equation for convenience. See \cite{Li}.
 
 %%%%%%%%%%  Connection between D_2 and R_ks  %%%%%%%%%%%%%%%%%%
\section{Connection between $\Dtwo$ and the Rarita-Schwinger operators}
In this section, we will rewrite the higher spin Laplace operator $\Dtwo$ in terms of the Rarita-Schwinger operators. This allows us to apply the Stokes' theorem for the Rarita-Schwinger operators in the last section.
\begin{proposition}\label{D2exp}
Let $P_k^+$ and $P_k^-$ be the projection maps defined in (\ref{Pk+}) and (\ref{Pk-}). Then the higher spin Laplace operator $\Dtwo$ can be written as follows.
\begin{eqnarray}
\Dtwo&=&-R_k^2P_k^++\frac{2T_k^*R_kP_k^+}{m+2k-4}-\frac{2T_kQ_kP_k^-}{m+2k-4}-\frac{(m+2k)Q_k^2P_k^-}{m+2k-4},\label{D21}\\
         &=&-R_k^2P_k^++\frac{2R_kT_kP_k^-}{m+2k-4}-\frac{2Q_kT_k^*P_k^+}{m+2k-4}-\frac{(m+2k)Q_k^2P_k^-}{m+2k-4},\label{D22}
\end{eqnarray}
when it acts on a function $f(x,u)\in C^{\infty}(\Rm,\Hk).$
\end{proposition}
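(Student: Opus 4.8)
The plan is to decompose the action of $\Dtwo$ on $C^\infty(\Rm,\Hk)$ by splitting the source space via the Almansi-Fischer decomposition $\Hk = \Mk \oplus u\Mkk$ and chasing where each piece of $\Dtwo$ sends it. Write $f = P_k^+ f + P_k^- f =: f^+ + f^-$ with $f^+\in C^\infty(\Rm,\Mk)$ and $f^-\in C^\infty(\Rm,u\Mkk)$. Since $R_k,T_k,T_k^*,Q_k$ are all of the form $P_k^\pm D_x$ restricted to the appropriate summand, the right-hand sides of (\ref{D21}) and (\ref{D22}) are really bookkeeping for "apply $D_x$, project, apply $D_x$ again, project" along the four possible routes $\Mk\to\Mk\to\Mk$, $\Mk\to u\Mkk\to\Mk$, etc. So the strategy is: first re-express the three constituent terms of $\Dtwo$ (the Laplacian, the mixed first-order term, and the $\|u\|^2$ second-order term) purely in terms of $D_x$, $D_u$, $u$, and the Euler operator, and then show that composing $D_x$ with the projection operators (\ref{Pk+})--(\ref{Pk-}) reproduces exactly that expression.

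The key computational input is the identity $D_x^2 = -\Delta_x$ together with how $P_k^\pm$ interacts with $D_x$. The plan is to start from $D_x = P_k^+ D_x + P_k^- D_x$ acting on each summand, so that for $g\in C^\infty(\Rm,\Hk)$ one has $D_x g = D_x g^+ + D_x g^-$, and $D_x g^+ = R_k g^+ + T_k^* g^+$ (the $\Mk$ and $u\Mkk$ components), while $D_x g^- = T_k g^- + Q_k g^-$. Iterating, $D_x^2 g^+ = D_x(R_k g^+) + D_x(T_k^* g^+) = (R_k^2 + T_k T_k^*)g^+ + (T_k^* R_k + Q_k T_k^*) g^+$ after projecting again, and similarly $D_x^2 g^- = (R_k T_k + T_k^* Q_k)g^- + (T_k T_k + Q_k^2)$-type terms — one must be careful that $T_k T_k$ actually means $T_k Q_k$ since the intermediate value lands in $u\Mkk$. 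Matching the $\Mk$- and $u\Mkk$-valued components of $-\Delta_x f = D_x^2 f$ against the corresponding components of $\Dtwo f$ will produce the coefficients $\tfrac{2}{m+2k-4}$ and $\tfrac{m+2k}{m+2k-4}$. The two displayed forms (\ref{D21}) and (\ref{D22}) then differ only by whether one reads the middle terms as $T_k^* R_k$ versus $R_k T_k$, i.e. which factor's projection is performed "first"; establishing their equality is an exercise in the operator identities $P_k^+ D_x P_k^+ D_x = $ etc., or equivalently in the commutation of $D_x$ with $uD_u$.

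The main obstacle, and the part deserving the most care, is controlling the mixed second-order term $\tfrac{4\udx\dudx}{m+2k-2}$ and the term $\tfrac{4\|u\|^2\dudx^2}{(m+2k-2)(m+2k-4)}$ when they are pushed through the projectors: these involve $u$-derivatives and multiplication by $u$ and $\|u\|^2$, so one needs the precise commutators among $D_x$, $D_u$, $u$, and the degree operator on homogeneous-degree-$k$ monogenics — in particular the relation $D_u u p_{k-1} = (-m-2k+2)p_{k-1}$ quoted in the text, and its analogue for $D_u$ acting after a $D_x$ has changed nothing in $u$. A clean way to organize this is to verify the proposition first on the two "pure" pieces ($f^-=0$, then $f^+=0$) and then add; on the $f^+$ piece only the $R_k^2 P_k^+$ and $T_k^* R_k P_k^+$ terms survive (together with one $Q_k T_k^*$ or $T_k Q_k$ contribution from the higher-order $u$ terms), and on the $f^-$ piece only the $Q_k$-family terms. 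I expect the bulk of the work to be bookkeeping the constants; the conceptual content is just that $\Dtwo$ is the "$u\Mkk\oplus\Mk$-graded square" of $D_x$ twisted by the $\|u\|^2 D_u^2$ correction, which is exactly what the two factorizations assert.
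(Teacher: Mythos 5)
Your route is in essence the paper's: split $f=P_k^+f+P_k^-f$ by the Almansi--Fischer decomposition, verify the identity on each summand by commutator algebra with $D_x$, $D_u$, $u$ and $\Eu$, and then pass from (\ref{D21}) to (\ref{D22}) via the identities $T_kQ_k=-R_kT_k$ on $C^{\infty}(\Rm,u\Mkk)$ and $T_k^*R_k=-Q_kT_k^*$ on $C^{\infty}(\Rm,\Mk)$ (the paper cites these from \cite{Bures}; your idea of deriving them from the graded square of $D_x$ works, but note that what one actually uses is that $P_k^{\pm}$ commutes with $\Delta_x$ --- it does \emph{not} commute with $D_x$ --- so that $T_k^*R_k+Q_kT_k^*=P_k^-D_x^2=-\Delta_xP_k^-=0$ on $\Mk$-valued functions, and similarly on $u\Mkk$-valued ones). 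The one structural difference is that the paper does not rederive everything from the definition of $\Dtwo$: it imports Lemma \ref{lemmaD2} from \cite{B1}, after which the $\Mk$ half reduces to the one-line observation $u\dudx R_kp_1=-\tfrac12\,uD_uD_xR_kp_1$ (since $D_u$ kills the monogenic output), and only the $u\Mkk$ half requires the long direct computation of $\Dtwo u$, $Q_k^2u$ and $T_kQ_ku$ on $\Mkk$-valued functions. Your plan would have to reproduce the content of that lemma as well; that is more work, not a gap.

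Two concrete points need repair before this becomes a proof. First, the claim that matching the $\Mk$- and $u\Mkk$-components of $-\Delta_xf=D_x^2f$ against those of $\Dtwo f$ ``will produce the coefficients'' cannot be taken literally: grading $D_x^2$ only yields $R_k^2+T_kT_k^*$ and $T_k^*R_k+Q_kT_k^*$ (on the $\Mk$ piece), etc., all with unit coefficients; the constants $\tfrac{2}{m+2k-4}$ and $\tfrac{m+2k}{m+2k-4}$ come entirely from pushing the remaining two terms of $\Dtwo$, namely $-\tfrac{4\udx\dudx}{m+2k-2}$ and $\tfrac{4||u||^2\dudx^2}{(m+2k-2)(m+2k-4)}$, through the projectors, using $D_xu=-uD_x-2\udx$, $D_uu=-m-2\Eu-uD_u$, $\Eu D_x=(k-1)D_x$ and the vanishing of $D_u$ on the monogenic factors. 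That computation is the entire substance of the paper's proof, and your sketch only promises it. Second, your displayed splitting of $D_x^2g^-$ is mis-grouped: since $D_xg^-=T_kg^-+Q_kg^-$ with $T_kg^-$ valued in $\Mk$ and $Q_kg^-$ in $u\Mkk$, the $\Mk$-component of $D_x^2g^-$ is $(R_kT_k+T_kQ_k)g^-$ and the $u\Mkk$-component is $(T_k^*T_k+Q_k^2)g^-$; the composition $T_k^*Q_k$ you wrote does not even make sense on these spaces ($T_k^*$ is only defined on $\Mk$-valued functions). These are fixable bookkeeping errors, but they sit exactly where the content of the proposition lies, so they must be fixed for the argument to close.
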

Here, we first prove the equation (\ref{D21}). To accomplish this, we need the following lemma.
\begin{lemma} (\cite{B1})\label{lemmaD2}
Let $p_1$ and $p_0$ be the projection maps in the Almansi-Fischer decomposition of $\Hk$ defined as
\begin{eqnarray*}
p_1=1+\frac{uD_u}{m+2k-2}:\ \Hk\longrightarrow \Mk,\\
p_0=\frac{-D_u}{m+2k-2}:\ \Hk\longrightarrow \Mkk.
\end{eqnarray*}
Then for any $f(x,u)\in C^{\infty}(\Rm,\Hk)$, we have
\begin{eqnarray}\label{D23}
\Dtwo&=&-R_k^2p_1+\frac{4u\langle D_u,D_x\rangle R_kp_1}{(m+2k-2)(m+2k-4)}p_1\nonumber\\
&&-uR_{k-1}^2p_0-\frac{4}{m+2k-2}\big(\langle u,D_x\rangle-\frac{||u||^2\langle D_u,D_x\rangle}{m+2k-4}\big)R_{k-1}p_0.
\end{eqnarray}
\end{lemma}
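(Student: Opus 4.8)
The plan is to verify (\ref{D23}) by evaluating $\Dtwo$ separately on the two summands of the Almansi--Fischer decomposition (\ref{Almansi}): write $f(x,u)=f_1(x,u)+uf_0(x,u)$ with $f_1:=p_1f\in C^{\infty}(\Rm,\Mk)$ and $f_0:=p_0f\in C^{\infty}(\Rm,\Mkk)$ (so that $f_1,f_0$ are monogenic in $u$), and then re-assemble the two outputs in terms of $R_k$, $R_{k-1}$ and the off-diagonal operators $\udx$, $\dudx$, left multiplication by $u$, and left multiplication by $||u||^2$. The starting point is a short toolbox of commutator identities, each a one-line consequence of $e_ie_j+e_je_i=-2\delta_{ij}$ and the product rule: for $g$ monogenic in $u$,
\[
D_uD_xg=-2\dudx g,\qquad D_x(ug)=-uD_xg-2\udx g,\qquad D_u(\udx g)=D_xg,
\]
together with the degree bookkeeping ($\dudx$ lowers the $u$-degree by one and carries monogenics to monogenics, while $u\,\cdot$ and $\udx$ raise it by one and multiplication by $||u||^2$ raises it by two), the identity $D_u(||u||^2h)=2uh$ for $h$ monogenic in $u$, and the fact that
\[
\Big(\udx-\frac{||u||^2\dudx}{m+2j-2}\Big):\ \mathcal{M}_j\longrightarrow\mathcal{H}_{j+1},
\]
a consequence of $\Delta_u(\udx g)=2\dudx g$ for $g\in\mathcal{M}_j$ and $\Delta_u(||u||^2h)=2(m+2j-2)h$ for $h\in\mathcal{H}_{j-1}$.

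For the $\Mk$-summand: by (\ref{Pk+}) and $D_uD_xf_1=-2\dudx f_1$ one has $R_kf_1=D_xf_1-\dfrac{2u\dudx f_1}{m+2k-2}$; applying $P_k^+D_x$ a second time and simplifying with $D_x^2=-\Delta_x$, the rule for $D_x(u\,\cdot)$, and $\dudx f_1\in\Mkk$, the expected outcome is
\[
\Dtwo f_1=-R_k^2f_1+\frac{4u\dudx R_kf_1}{(m+2k-2)(m+2k-4)},
\]
which is the first line of (\ref{D23}) specialized to $f_1=p_1f$. The mechanism is that the two cross terms of $\Dtwo$, namely $-\tfrac{4\udx\dudx}{m+2k-2}f_1$ and $\tfrac{4||u||^2\dudx^2}{(m+2k-2)(m+2k-4)}f_1$, are produced precisely by the $u$-valued contributions generated when $P_k^+$ and $D_x(u\,\cdot)$ are pushed through, so they are absorbed and only the stated two terms survive.

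For the $u\Mkk$-summand: substitute $uf_0$ into the explicit formula for $\Dtwo$ and move the constant-in-$x$ factor $u$ to the far left past $\Delta_x$, $\udx\dudx$, and $||u||^2\dudx^2$, each relocation absorbing the commutators $[D_x,u\,\cdot]$, $[\dudx,u\,\cdot]$, $[\udx,u\,\cdot]$ from the toolbox; regrouping $D_xf_0-\dfrac{2u\dudx f_0}{m+2k-4}=R_{k-1}f_0$ inside the computation (this is the $\Mk$-block calculation with $k$ replaced by $k-1$, which is where the $m+2k-4$ denominators enter), the result should collapse to
\[
\Dtwo(uf_0)=-uR_{k-1}^2f_0-\frac{4}{m+2k-2}\Big(\udx-\frac{||u||^2\dudx}{m+2k-4}\Big)R_{k-1}f_0,
\]
the last two terms of (\ref{D23}) specialized to $f_0=p_0f$. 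Adding the two blocks and using $I=p_1+up_0$ on $\Hk$ gives (\ref{D23}); since all the operators involved are defined on $C^{\infty}(\Rm,\Hk)$, verifying the identity on each Almansi--Fischer summand suffices.

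The main obstacle is the bookkeeping: $u\,\cdot$ commutes with none of $D_x$, $\udx$, $\dudx$, so every relocation of $u$ produces a commutator carrying a scalar coefficient built from $m+2k-2$ and $m+2k-4$, and all of these must be tracked so that the expressions collapse to exactly two terms per block --- in particular the cancellation of the cross terms at the end of the $\Mk$-block and the emergence of the harmonic combination $\udx-\tfrac{||u||^2\dudx}{m+2k-4}$ in the $u\Mkk$-block are the delicate points. The steady sanity check is homogeneity in $u$ ($\dudx$ lowers the degree by one, $u\,\cdot$ and $\udx$ raise it by one, multiplication by $||u||^2$ by two): matching $u$-degrees line by line catches sign and coefficient errors before they propagate.
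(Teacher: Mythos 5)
Your plan is correct, but note first that the paper never proves Lemma \ref{lemmaD2}: it is imported from \cite{B1}, so there is no in-paper proof to compare against, and your write-up supplies a self-contained verification that the paper delegates to the reference. Your route --- verify (\ref{D23}) separately on the two Almansi--Fischer summands $f_1=p_1f\in C^{\infty}(\Rm,\Mk)$ and $uf_0$ with $f_0=p_0f\in C^{\infty}(\Rm,\Mkk)$, then add --- is exactly the computational style the paper itself uses in proving Proposition \ref{D2exp}; in fact your second block identity $\Dtwo(uf_0)=-uR_{k-1}^2f_0-\frac{4}{m+2k-2}\big(\udx-\frac{||u||^2\dudx}{m+2k-4}\big)R_{k-1}f_0$ is equivalent to the paper's formula (\ref{Dtwou}) for $\Dtwo u$ acting on $\Mkk$-valued functions: expanding your right-hand side with $R_{k-1}f_0=D_xf_0-\frac{2u\dudx f_0}{m+2k-4}$, $D_x(ug)=-uD_xg-2\udx g$, $\dudx(ug)=D_xg+u\dudx g$ and $u^2=-||u||^2$, the coefficients of $u\Delta_x$, $u\udx\dudx$, $\udx D_x$, $||u||^2D_x\dudx$ and $u||u||^2\dudx^2$ all agree; and your first block identity $\Dtwo f_1=-R_k^2f_1+\frac{4u\dudx R_kf_1}{(m+2k-2)(m+2k-4)}$ also checks out by the same toolbox. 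Two remarks: the trailing duplicate $p_1$ in the first line of (\ref{D23}) is a harmless typo (since $p_1^2=p_1$), and your reading without it is the intended one; and the only real shortcoming of the proposal is that the two central block identities are announced (``the expected outcome is'', ``should collapse to'') rather than derived, so to turn the plan into a proof you still need to write out that routine algebra --- but since both identities are in fact correct, this is bookkeeping you have already set up, not a gap in the argument.
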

Here, we remind the reader that the projection map $p_1$ is the same as $P_k^+$, but $p_0$ is different from $P_k^-$. Indeed, $P_k^-=up_0$. Now, we can start verifying equation (\ref{D21}).
\begin{proof}
From the expression of $\Dtwo$ in Lemma \ref{lemmaD2}, we notice that, for any $f(x,u)\in C^{\infty}(\Rm,\Hk)$, we have $R_kp_1f(x,u)\in C^{\infty}(\Rm,\Mk)$. It is easy to see that
\begin{eqnarray*}
u\dudx R_kp_1f(x,u)=-\frac{uD_uD_x+uD_xD_u}{2}R_kp_1f(x,u)=-\frac{uD_uD_x}{2}R_kp_1f(x,u).
\end{eqnarray*}
The last equation comes from $D_u R_kp_1f(x,u)=0$. Hence, the first two terms of (\ref{D23}) become
\begin{eqnarray*}
-R_k^2p_1+\frac{4u\langle D_u,D_x\rangle R_k}{(m+2k-2)(m+2k-4)}p_1
=-R_k^2P_k^++\frac{2T_k^*R_kP_k^+}{m+2k-4},
\end{eqnarray*}
which are exactly the first two terms in (\ref{D21}). \\
\par
On the other hand, Lemma $\ref{lemmaD2}$ tells that the last two terms in (\ref{D21}) come from $\Dtwo$ acting on $P_k^-f(x,u)\in C^{\infty}(\Rm, u\Mkk)$, for $f(x,u)\in C^{\infty}(\Rm,\Hk)$. Next, we show that $\Dtwo$ is a linear combination of $Q_k^2$ and $T_kQ_k$ when acting on functions in $C^{\infty}(\Rm,u\Mkk)$. This makes sense, since $Q_k^2$ and $T_kQ_k$ are the two ``paths", which start from $C^{\infty}(\Rm,u\Mkk)$ and end in $C^{\infty}(\Rm,\Hk)$. See the following diagram.
\begin{center}
\includegraphics{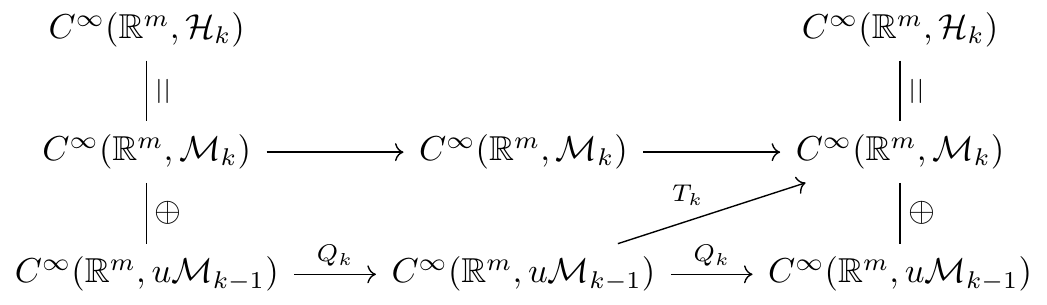}
\end{center}
%\begin{center}
%\begin{tikzcd}
%C^{\infty}(\Rm,\Hk) \arrow[d,"||",dash] & &C^{\infty}(\Rm,\Hk)\arrow[d,"||",dash]\\
 % C^{\infty}(\Rm,\Mk) \arrow[r] \arrow[d,"\displaystyle\oplus",dash]
   % &  C^{\infty}(\Rm,\Mk) \arrow[r]
%&C^{\infty}(\Rm,\Mk) \arrow[d,"\displaystyle\oplus",dash] \\
 % C^{\infty}(\Rm,u\Mkk) \arrow[r, "Q_k"]
%&C^{\infty}(\Rm,u\Mkk) \arrow[r, "Q_k"]\arrow[ru,"T_k"]
%&C^{\infty}(\Rm,u\Mkk)
 %\end{tikzcd}
%\end{center}
Hence, we calculate $\Dtwo$, $Q_k^2$ and $T_kQ_k$ acting on $C^{\infty}(\Rm,u\Mkk)$, respectively. For our convenience, we assume $P_k^-f(x,u)=ug(x,u)$ for some $g(x,u)\in C^{\infty}(\Rm,\Mkk)$. Now, we calculate $\Dtwo u$ and keep in mind that it acts on a function $g(x,u)\in\Mkk$ with respect to the variable $u$. In other words, in the calculation below, any operator with $D_u$ on the right hand side vanishes.
\begin{eqnarray}
\Dtwo u&=&\bigg(\Delta_x-\displaystyle\frac{4}{m+2k-2}\big(\udx-\displaystyle\frac{||u||^2\dudx^2}{m+2k-4}\big)\dudx\bigg)u\nonumber\\
            &=&u\Delta_x-\frac{4}{m+2k-2}\big(\udx-\frac{||u||^2\dudx}{m+2k-4}\big)\big(u\dudx+D_x\big)\nonumber\\
            &=&u\Delta_x-\frac{4u\udx\dudx}{m+2k-2}-\frac{4\udx D_x}{m+2k-2}+\frac{4u||u||^2\dudx^2}{(m+2k-2)(m+2k-4)}\nonumber\\
            &&+\frac{8||u||^2\dudx D_x}{(m+2k-2)(m+2k-4)}.\label{Dtwou}
\end{eqnarray}
On the other hand, since
$$D_xu=-uD_x-2\langle u,D_x\rangle,\ D_uu=-m-2\Eu-uD_u,$$
we have
\begin{eqnarray*}
&&Q_k^2u=\frac{uD_uD_xuD_uD_x}{(m+2k-2)^2}u=\frac{uD_uD_xuD_u(-uD_x-2\udx)}{(m+2k-2)^2}\\
&=&\frac{1}{(m+2k-2)^2}\big(-uD_uD_xu(-m-uD_u-2\mathbb{E}_u)D_x-2uD_uD_xu(\udx D_u+D_x)\big),
\end{eqnarray*}
where $\mathbb{E}_u=\displaystyle\sum_{i=1}^mu_i\displaystyle\frac{\partial}{\partial u_i}$ is the Euler operator. Since $\Eu D_xg(x,u)=(k-1)D_xg(x,u)$ and $D_ug(x,u)=0$ for $g(x,u)\in C^{\infty}(\Rm,\Mkk)$. The equation above becomes
\begin{eqnarray*}
&&\frac{m}{(m+2k-2)^2}uD_uD_xuD_x-\frac{uD_uD_x||u||^2D_uD_x}{(m+2k-2)^2}+\frac{2uD_uD_xu\mathbb{E}_uD_x}{(m+2k-2)^2}-\frac{2uD_uD_xuD_x}{(m+2k-2)^2}\\
&=&\frac{m+2k-4}{m+2k-2}uD_u(-uD_x-2\udx)D_x-\frac{u(2u+||u||^2D_u)D_xD_uD_x}{(m+2k-2)^2}.
\end{eqnarray*}
The last equation comes from combining all $uD_uD_xuD_x$ terms with the fact that $\Eu D_x=(k-1)D_x$. Applying similar identities that we just used again, we get
\newpage
\begin{eqnarray}
&&-\frac{m+2k-4}{(m+2k-2)^2}u(-m-uD_u-2\mathbb{E}_u)D_x^2-\frac{2(m+2k-4)}{(m+2k-2)^2}u(D_x+\udx D_u)D_x\nonumber\\
&&-\frac{2u^2D_xD_uD_x}{(m+2k-2)^2}-\frac{u||u||^2}{(m+2k-2)^2}(-D_xD_u-2\dudx)(-2\dudx).\nonumber
\end{eqnarray}
Simplifying the previous equation, we have
\begin{eqnarray}
Q_k^2u&=&\frac{(m+2k-4)^2}{(m+2k-2)^2}uD_x^2+\frac{4(m+2k-4)}{(m+2k-2)^2}u\udx\dudx+\frac{4u^2D_x\dudx}{(m+2k-2)^2}\nonumber\\
&&-\frac{4u||u||^2\dudx^2}{(m+2k-2)^2}.\label{Qk2u}
\end{eqnarray}
Similarly, we have the details for rewriting $T_kQ_ku$ as follows. Since $D_xu=-uD_x-2\langle u,D_x\rangle,$ we have
\begin{eqnarray*}
&&T_kQ_ku=(D_x+\frac{uD_uD_x}{m+2k-2})(-\frac{uD_uD_x}{m+2k-2})u=-D_x\frac{uD_uD_x}{m+2k-2}u-Q_k^2u\\
&=&\frac{-1}{m+2k-2}(-uD_x-2\udx)D_u(-uD_x-2\udx)-Q_k^2u\\
&=&-\frac{uD_xD_uuD_x}{m+2k-2}-\frac{2\udx D_uuD_x}{m+2k-2}-\frac{2uD_xD_u\udx}{m+2k-2}-\frac{4\udx D_u\udx}{m+2k-2}-Q_k^2u.
\end{eqnarray*}
As $D_uu=-m-2\Eu-uD_u$ and $D_u\langle u,D_x\rangle=D_x+\langle u,D_x\rangle$, the equation above becomes
\begin{eqnarray*}
&&-\frac{uD_x(-m-2\Eu-uD_u)D_x}{m+2k-2}-\frac{2\udx(-m-2\Eu-uD_u)D_x}{m+2k-2}\\
&&-\frac{2uD_x(\udx D_u+D_x)}{m+2k-2}-\frac{4\udx(\udx D_u+D_x)}{m+2k-2}-Q_k^2u.
\end{eqnarray*}
Recall that this operator acts on $g(x,u)\in C^{\infty}(\Rm,\Mkk)$, which means $$\Eu D_xg(x,u)=(k-1)D_xg(x,u),\ uD_x\udx D_ug(x,u)=0\ \text{and}\ \udx^2 D_ug(x,u)=0.$$ 
Hence, with $D_uD_x=-D_xD_u-2\langle D_u,D_x\rangle$, we get
\begin{eqnarray*}
T_kQ_ku&=&\frac{uD_xuD_uD_x}{m+2k-2}+uD_x^2+2\udx D_x-\frac{4u\udx\dudx}{m+2k-2}-\frac{2uD_x^2}{m+2k-2}\\
&&-\frac{4\udx D_x}{m+2k-2}-Q_k^2u\\
&=&\frac{u(-uD_x-2\udx)(-D_xD_u-2\dudx)}{m+2k-2}+\frac{m+2k-4}{m+2k-2}uD_x^2\\
&&+\frac{2(m+2k-4)}{m+2k-2}\udx D_x-\frac{4u\udx\dudx}{m+2k-2}-Q_k^2u
\end{eqnarray*}
Since $u(-uD_x-2\udx)D_xD_ug(x,u)=0$ for $g(x,u)\in C^{\infty}(\Rm,\Mkk)$, the previous equation becomes
\begin{eqnarray}
&&\frac{-2||u||^2\dudx D_x+4u\udx\dudx}{m+2k-2}+\frac{m+2k-4}{m+2k-2}uD_x^2\nonumber\\
&&+\frac{2(m+2k-4)}{m+2k-2}\udx D_x-\frac{4u\udx\dudx}{m+2k-2}-Q_k^2u.\nonumber
\end{eqnarray}
Simplifying the equation above, we have
\begin{eqnarray}
T_kQ_ku=-\frac{2||u||^2\dudx D_x}{m+2k-2}+\frac{m+2k-4}{m+2k-2}uD_x^2+\frac{2(m+2k-4)}{m+2k-2}\udx D_x-Q_k^2u.\label{TkQku}
\end{eqnarray}
Combining the equations (\ref{Dtwou}), (\ref{Qk2u}) and (\ref{TkQku}), a straightforward verification shows that
\begin{eqnarray*}
\Dtwo u=-\frac{2T_kQ_ku}{m+2k-4}-\frac{m+2k}{m+2k-4}Q_k^2u.
\end{eqnarray*}
This completes the proof of equation (\ref{D21}). In \cite{Bures}, the authors point out that
\begin{eqnarray*}
&&T_kQ_kf(x,u)=-R_kT_kf(x,u),\ \text{if}\ f(x,u)\in C^{\infty}(\Rm,u\Mkk);\\
&&T_k^*R_kf(x,u)=-Q_kT_k^*f(x,u),\ \text{if}\ f(x,u)\in C^{\infty}(\Rm,\Mk).
\end{eqnarray*} 
Then we can obtain (\ref{D22}) from (\ref{D21}) immediately.
\end{proof}

%%%%%%%%%%%  Connection between fundamental solutions %%%%%%%%%%
\section{Connection between fundamental solution of $\Dtwo$ and fundamental solutions of $R_k$ and $Q_k$}
In equation (\ref{D22}), we have
\be
\Dtwo&=&-R_k^2P_k^++\frac{2R_kT_kP_k^-}{m+2k-4}-\frac{2Q_kT_k^*P_k^+}{m+2k-4}-\frac{(m+2k)Q_k^2P_k^-}{m+2k-4}\\
&=&R_k(-R_kP_k^++\frac{2T_kP_k^-}{m+2k-4})+Q_k(-\frac{2T_k^*P_k^+}{m+2k-4}-\frac{(m+2k)Q_kP_k^-}{m+2k-4}).
\ee
We let $A_k=-R_kP_k^++\displaystyle\frac{2T_kP_k^-}{m+2k-4}$ and $B_k=-\displaystyle\frac{2T_k^*P_k^+}{m+2k-4}-\displaystyle\frac{(m+2k)Q_kP_k^-}{m+2k-4}$ for convenience, then we have
$$\Dtwo=R_kA_k+Q_kB_k.$$
The main result of this section is the following.
\begin{proposition}\label{fund}
Let $E_k(x,u,v),\ F_k(x,u,v)$ and $H_k(x,u,v)$ be the fundamental solutions for $R_k$, $Q_k$ and $\Dtwo$ respectively (see Section $2$). We have
\be
&&H_k(x,u,v)A_{k,r}=E_k(x,u,v),\\
&&H_k(x,u,v)B_{k,r}=F_k(x,u,v),
\ee
where $A_{k,r}$ and $B_{k,r}$ act from the right hand side of $H_k(x,u,v).$
\end{proposition}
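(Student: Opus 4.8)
The plan is to derive both identities from the decomposition $\mathcal{D}_2=R_kA_k+Q_kB_k$ of Proposition~\ref{D2exp} together with the defining properties of the three fundamental solutions. Recall (up to the normalizing constants listed in Section~2) that $\mathcal{D}_{2}H_k(x,u,v)=\delta(x)Z_k^2(u,v)$, that $R_kE_k(x,u,v)=\delta(x)Z_k^1(u,v)$, and that $Q_kF_k(x,u,v)=\delta(x)Z_k^{2,-}(u,v)$, where $Z_k^2=Z_k^1+Z_k^{2,-}$ is the splitting of the reproducing kernel of $\mathcal{H}_k$ subordinate to the Almansi--Fischer decomposition $\mathcal{H}_k=\mathcal{M}_k\oplus u\mathcal{M}_{k-1}$ in the $u$--variable; here $Z_k^1$ is the reproducing kernel of $\mathcal{M}_k$ and $Z_k^{2,-}$ that of $u\mathcal{M}_{k-1}$ (the latter being, as in the formula for $F_k$, built from $Z_{k-1}^1$). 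As remarked at the end of Section~2, each of these relations has a right-hand counterpart, so in particular $E_kR_{k,r}=\delta(x)Z_k^1(u,v)$ and $F_kQ_{k,r}=\delta(x)Z_k^{2,-}(u,v)$, the subscript $r$ denoting the operator acting from the right.

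The first step is to pass to the right-hand form of equation~(\ref{D22}). Since $\mathcal{D}_2$ has scalar coefficients it acts in the same way from the left and from the right, and applying the reversion anti-involution to $\mathcal{D}_2f=R_k(A_kf)+Q_k(B_kf)$ converts each left operator into the corresponding one acting from the right, with the order within each composition reversed. This yields, for $\mathcal{C}l_m$-valued functions $g$,
\[
\mathcal{D}_2 g=\big(gA_{k,r}\big)R_{k,r}+\big(gB_{k,r}\big)Q_{k,r}.
\]
Taking $g=H_k$ and using $\mathcal{D}_2H_k=\delta(x)Z_k^2(u,v)$ gives
\[
\delta(x)\,Z_k^1(u,v)+\delta(x)\,Z_k^{2,-}(u,v)=\big(H_kA_{k,r}\big)R_{k,r}+\big(H_kB_{k,r}\big)Q_{k,r}.
\]
Because $A_k$ has range in $\mathcal{M}_k$ (and $R_k$ preserves $\mathcal{M}_k$), $\big(H_kA_{k,r}\big)R_{k,r}$ is $\mathcal{M}_k$-valued in $u$; likewise $\big(H_kB_{k,r}\big)Q_{k,r}$ is $u\mathcal{M}_{k-1}$-valued. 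Matching the two summands of the Almansi--Fischer decomposition on each side, we obtain
\[
\big(H_kA_{k,r}\big)R_{k,r}=\delta(x)\,Z_k^1(u,v),\qquad \big(H_kB_{k,r}\big)Q_{k,r}=\delta(x)\,Z_k^{2,-}(u,v).
\]

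Comparing with $E_kR_{k,r}=\delta(x)Z_k^1(u,v)$ and $F_kQ_{k,r}=\delta(x)Z_k^{2,-}(u,v)$, the differences $H_kA_{k,r}-E_k$ and $H_kB_{k,r}-F_k$ are annihilated from the right by $R_k$, respectively $Q_k$, in the distributional sense in $x$. Since $R_k$ and $Q_k$ are elliptic these differences are smooth; moreover $H_k$ is homogeneous of degree $2-m$ in $x$, the first-order operators $A_{k,r},B_{k,r}$ lower this degree by one, and $E_k,F_k$ are homogeneous of degree $1-m$, so each difference is a smooth function on $\mathbb{R}^m$ that is homogeneous of negative degree $1-m$, hence identically zero. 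This gives $H_kA_{k,r}=E_k$ and $H_kB_{k,r}=F_k$.

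I expect the delicate point to be the first step: checking that reversion really turns $R_kA_k+Q_kB_k$ into $A_{k,r}R_{k,r}+B_{k,r}Q_{k,r}$ with precisely the normalization (signs included) present in the statement, and then, in the matching step, keeping the constants of the three fundamental solutions of Section~2 consistent so that $(H_kA_{k,r})R_{k,r}$ and $E_kR_{k,r}$, and the corresponding $B$-terms, agree on the nose and not merely up to a scalar. The ellipticity of $R_k$ and $Q_k$, and hence the uniqueness of their fundamental solutions among kernels of the correct homogeneity, are standard and can be quoted from \cite{D,Li}.
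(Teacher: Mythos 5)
Your proposal is correct in substance and shares the paper's central idea: apply the right-handed form of the decomposition $\Dtwo=R_kA_k+Q_kB_k$ to $H_k$ and separate the two summands using the Almansi--Fischer decomposition of $\Hk$. The difference lies in how the two key steps are executed. The paper does the separation weakly: it pairs $H_k(x-y,u,v)\Dtwo=\delta(y)Z_k^2(u,v)$ against arbitrary $f\in C_c^{\infty}(\Rm,\Hk)$, kills the cross terms with Lemma \ref{Ortho}, and then specializes to $f\in C_c^{\infty}(\Rm,\Mk)$ and $f\in C_c^{\infty}(\Rm,u\Mkk)$ to obtain $H_kA_{k,r}R_k=\delta Z_k^1$ and $H_kB_{k,r}Q_k=\delta Z_{k-1}^1(u,v)u$; the final identification with $E_k$ and $F_k$ is then made by a one-line appeal to the definition (i.e.\ uniqueness) of the fundamental solution. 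You instead match components directly at the kernel level, which requires the splitting $Z_k^2=Z_k^1+Z_k^{2,-}$ of the reproducing kernel subordinate to $\Hk=\Mk\oplus u\Mkk$ --- a fact you should state and justify, though it is essentially equivalent to the orthogonality in Lemma \ref{Ortho} --- and you close with an explicit uniqueness argument (hypoellipticity of $R_k$ and $Q_k$ plus homogeneity of degree $1-m$ forcing the smooth difference to vanish). That last step supplies detail the paper leaves implicit, at the cost of quoting ellipticity of the Rarita--Schwinger type operators, which the paper's test-function route never needs. Your reversion step producing $\Dtwo g=(gA_{k,r})R_{k,r}+(gB_{k,r})Q_{k,r}$ is also used by the paper (implicitly, in its equation (\ref{equation1})) without proof, so you are on equal footing there; if you write it out, verify the right-handed forms of $P_k^{\pm}$, $T_k$, $T_k^*$ carefully so that the constants and signs in $A_{k,r}$ and $B_{k,r}$ come out exactly as stated, since the proposition claims equality on the nose and not up to a scalar.
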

To prove the proposition above, we need the following technical lemma.
\begin{lemma}\cite{D}\label{Ortho}
Suppose $p_k$ is a left monogenic polynomial homogeneous of degree $k$ and $p_{k-1}$ is a right monogenic polynomial homogeneous of degree $k-1$ then
\be
(p_{k-1}(u)u,p_k(u))_u=\int\displaylimits_{\Sm}p_{k-1}(u)up_k(u)dS(u)=0.
\ee
\end{lemma}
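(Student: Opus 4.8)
The plan is to realize the spherical average as a boundary integral over the unit ball and then appeal to the classical Stokes (Cauchy--Pompeiu) theorem of Clifford analysis for the Dirac operator $D_u$ in the variable $u$. This is the same mechanism that, in the body of the paper, underlies the various Stokes theorems for the Rarita--Schwinger type operators, here in its most elementary (scalar Dirac) form.

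First I would record the elementary geometric observation that at a point $u\in\Sm$ the outward unit normal to the unit ball $B=\{u\in\Rm:\ \|u\|<1\}$ is $u$ itself, so that the Clifford-valued surface element is $d\sigma_u=u\,dS(u)$. Consequently
\begin{eqnarray*}
(p_{k-1}(u)u,p_k(u))_u=\int\displaylimits_{\Sm}p_{k-1}(u)\,u\,p_k(u)\,dS(u)=\int\displaylimits_{\partial B}p_{k-1}(u)\,d\sigma_u\,p_k(u).
\end{eqnarray*}
Next I would invoke the Stokes theorem in Clifford analysis (see \cite{Brackx,Belgians}): for $g,f\in C^1(\overline{B},\Clm)$,
\begin{eqnarray*}
\int\displaylimits_{\partial B}g(u)\,d\sigma_u\,f(u)=\int\displaylimits_{B}\big[(g(u)D_u)f(u)+g(u)\big(D_uf(u)\big)\big]\,dV(u),
\end{eqnarray*}
where $g(u)D_u$ denotes the right action of $D_u$ and $dV$ is the Lebesgue volume element. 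Since $p_{k-1}$ and $p_k$ are polynomials they extend smoothly to all of $\Rm$, so the formula applies with $g=p_{k-1}$ and $f=p_k$. By hypothesis $p_{k-1}$ is right monogenic, i.e.\ $p_{k-1}(u)D_u=0$, and $p_k$ is left monogenic, i.e.\ $D_up_k(u)=0$; hence the integrand on the right-hand side is identically zero and the boundary integral vanishes. Comparing the two displays gives $(p_{k-1}(u)u,p_k(u))_u=0$, as claimed.

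There is essentially no serious obstacle: the only point deserving attention is the sign and orientation convention implicit in the surface element $d\sigma_u$, but since the volume integral in the Stokes identity vanishes identically for this choice of $g$ and $f$, the conclusion is insensitive to that convention, and the smoothness hypothesis needed to apply Stokes is automatic because we are dealing with polynomials. (An alternative, purely algebraic route would expand $p_{k-1}(u)u$ and $p_k(u)$ into spherical monogenics and use reproducing-kernel orthogonality tied to the right and left Almansi--Fischer decompositions of $\Hk$; but the Stokes computation above is the shortest path.)
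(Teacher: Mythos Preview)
Your argument is correct: rewriting the spherical integral as $\int_{\partial B}p_{k-1}(u)\,d\sigma_u\,p_k(u)$ with $d\sigma_u=u\,dS(u)$ and then applying the Clifford--Stokes (Cauchy) theorem for $D_u$ kills the volume integral because $p_{k-1}D_u=0$ and $D_up_k=0$. Note that the paper does not actually prove this lemma; it is quoted from \cite{D}, and the proof there is exactly the one you give, so your approach coincides with the source's.
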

Now, we start proving the previous proposition.
\begin{proof}
Since $H_k(x,u,v)$ is the fundamental solution for $\Dtwo$, then we have
\be
 H_k(x-y,u,v)\Dtwo=\delta(y)Z_k^2(u,v),
\ee
where $Z_k^2(u,v)$ is the reproducing kernel for $\Hk$. In other words, for any function $f(x,u)\in C^{\infty}(\Rm,\Hk)$ with compact support with respect to the variable $x$, denoted by $ C_c^{\infty}(\Rm,\Hk)$, we have
\begin{eqnarray}
f(y,v)&=&\int\displaylimits_{R^m}(H_k(x-y,u,v)\Dtwo,f(x,u))_u\nonumber\\
&=&\int\displaylimits_{R^m}(H_k(x-y,u,v)(A_{k,r}R_k+B_{k,r}Q_k),P_k^+f(x,u)+P_k^-f(x,u))_u.\label{equation1}
\end{eqnarray}
From the expression of $A_{k,r}$, we observe that 
$$H_k(x-y,u,v)A_{k,r}\in C^{\infty}(\Rm,\mathcal{M}_{k,r}),\ H_k(x-y,u,v)A_{k,r}R_k\in C^{\infty}(\Rm,\mathcal{M}_{k,r}),$$
 where $\mathcal{M}_{k,r}$ stands for right monogenic polynomial space homogeneous of degree $k$. Similarly, we know that $H_k(x-y,u,v)B_{k,r}Q_k\in C^{\infty}(\Rm,\mathcal{M}_{k-1,r}u)$. In the meantime, we know that $P_k^+f\in C^{\infty}(\Rm,\Mk)$ and $P_k^-f\in C^{\infty}(\Rm,u\Mkk)$. Hence, with the help of Lemma \ref{Ortho}, we have
\begin{eqnarray}\label{RkAkreconstruct}
(H_k(x-y,u,v)A_{k,r}R_k,P_k^-f(x,u))_u=(H_k(x-y,u,v)B_{k,r}Q_k,P_k^+f(x,u))_u=0.
\end{eqnarray}
Therefore, equation (\ref{equation1}) tells us that
\be
&&\int\displaylimits_{R^m}(H_k(x-y,u,v)A_{k,r}R_k,P_k^+f(x,u))_u+\int\displaylimits_{R^m}(H_k(x-y,u,v)B_{k,r}Q_k,P_k^-f(x,u))_u\\
&&=f(y,v)=P_k^+f(y,v)+P_k^-f(y,v),
\ee
for any $f(y,v)\in C_c^{\infty}(\Rm,\Hk)$. In particular, for any $f(y,v)\in C_c^{\infty}(\Rm,\Mk)$, which means $P_k^-f=0$, we have
\be
\int\displaylimits_{R^m}(H_k(x-y,u,v)A_{k,r}R_k,P_k^+f(x,u))_u=P_k^+f(y,v).
\ee
This implies that 
$$H_k(x-y,u,v)A_{k,r}R_k=\delta(y)Z_k^1(u,v),$$
where $Z_k^1(u,v)$ is the reproducing kernel for $\Mk$. However, from the definition of fundamental solutions for a differential operator, this also implies that
$H_k(x-y,u,v)A_{k,r}$ must be the fundamental solution for $R_k$, i.e., 
$$H_k(x-y,u,v)A_k=E_k(x-y,u,v).$$
Similarly, if $f(y,v)\in C_c^{\infty}(\Rm,u\Mkk)$, which means $P_k^+f=0$, then we have
\begin{eqnarray}\label{QkBkreconstruct}
\int\displaylimits_{R^m}(H_k(x-y,u,v)B_{k,r}Q_k,P_k^-f(x,u))_u=P_k^-f(y,v),
\end{eqnarray}
which implies $H_k(x-y,u,v)B_{k,r}Q_k=\delta(x-y)Z_{k-1}^1(u,v)u$. We remind the reader that the $``u"$ on the right hand side is cancelled by the $``u"$ in $P_k^-f(x,u)\in C^{\infty}(\Rm,u\Mkk)$. Therefore,
$$H_k(x-y,u,v)B_{k,r}=F_k(x-y,u,v),$$
which completes the proof.
\end{proof}
%%%%%%%%%%%%%%%%%%%   Conformal invariant property   %%%%%%%%%%%%%%
\section{Conformally invariant property for $R_kA_k$ and $Q_kB_k$}
We notice that in the decomposition of $\Dtwo=R_kA_k+Q_kB_k$,
\begin{eqnarray}
&&R_kA_k:\ C^{\infty}(\Rm,\Hk)\longrightarrow C^{\infty}(\Rm,\Mk),\label{RkAk}\\
&&Q_kB_k:\ C^{\infty}(\Rm,\Hk)\longrightarrow C^{\infty}(\Rm,u\Mkk).\label{QkBk}
\end{eqnarray}
In this section, we will show that $R_kA_k$ and $Q_kB_k$ are both conformally invariant. More specifically,
\begin{theorem}\label{Intertwining}
Let $f(x,u)\in C^{\infty}(\Rm,\Hk)$, $x'=\varphi(x)=(ax+b)(cx+d)^{-1}$ is a M\"{o}bius transformation and $u'=\displaystyle\frac{(cx+d)u(\widetilde{cx+d})}{||cx+d||^2}$. Then we have
\begin{eqnarray*}
&&J_{-2}^{-1}(\varphi,x)(R_kA_k)_{x,u}J_2(\varphi,x)f(\varphi(x),\frac{(cx+d)u(\widetilde{cx+d})}{||cx+d||^2})=(R_kA_k)_{x',u'}f(x',u'),\\
&&J_{-2}^{-1}(\varphi,x)(Q_kB_k)_{x,u}J_2(\varphi,x)f(\varphi(x),\frac{(cx+d)u(\widetilde{cx+d})}{||cx+d||^2})=(Q_kB_k)_{x',u'}f(x',u'),
\end{eqnarray*}
where $J_{-2}(\varphi,x)$ and $J_{2}(\varphi,x)$ are defined in Theorem \ref{interDtwo}.
\end{theorem}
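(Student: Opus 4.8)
The plan is to obtain both intertwining identities at once by decomposing the already established intertwining relation for $\Dtwo$ from Theorem~\ref{interDtwo} along the Almansi--Fischer splitting $\Hk=\Mk\oplus u\Mkk$. The structural observation that makes this work is that the decomposition $\Dtwo=R_kA_k+Q_kB_k$ from Section~4 is precisely the decomposition of $\Dtwo$ induced by this splitting of its target: by \eqref{RkAk} and \eqref{QkBk}, for any $g(x,u)\in C^{\infty}(\Rm,\Hk)$ one has $R_kA_k\,g=P_k^+\Dtwo\,g$ and $Q_kB_k\,g=P_k^-\Dtwo\,g$, since $R_kA_k g$ is $\Mk$-valued and $Q_kB_k g$ is $u\Mkk$-valued. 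Hence, provided $P_k^{+}$ and $P_k^{-}$ commute with the group action $g\longmapsto J_2(\varphi,x)\,g\big(\varphi(x),\frac{(cx+d)u(\widetilde{cx+d})}{||cx+d||^2}\big)$, applying $P_k^+$ (respectively $P_k^-$) to both sides of the identity in Theorem~\ref{interDtwo} produces exactly the claimed identity for $R_kA_k$ (respectively $Q_kB_k$).

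I would first verify this equivariance. Since $J_{-2}(\varphi,x)=||cx+d||^{-m-2}$ and $J_2(\varphi,x)=||cx+d||^{2-m}$ are scalar-valued and do not involve $u$, while $P_k^{\pm}$ are built only from $u$ and $D_u$ (see \eqref{Pk+}--\eqref{Pk-}), the operators $P_k^{\pm}$ automatically commute with multiplication by $J_{\pm 2}$; so it is enough to check that $P_k^{\pm}$ commute with the substitution $g(u)\longmapsto g(u')$, where $u'=\frac{(cx+d)u(\widetilde{cx+d})}{||cx+d||^2}$. At a fixed $x$ this substitution is the orthogonal transformation $\theta(cx+d)\in SO(m)$ acting on $u$, and the Almansi--Fischer decomposition is covariant under such transformations (equivalently, $D_u$ behaves covariantly under this substitution; cf.\ \cite{D,Bures}), so the substitution maps $\Mk$ into $\Mk$ and $u\Mkk$ into $u'\Mkk$, and therefore commutes with $P_k^{\pm}$. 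In particular, each of the subspaces $C^{\infty}(\Rm,\Mk)$ and $C^{\infty}(\Rm,u\Mkk)$ of $C^{\infty}(\Rm,\Hk)$ is preserved by the action.

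With this in hand the argument is short. Applying $P_k^+$ to the identity in Theorem~\ref{interDtwo}: on the left, $P_k^+$ passes through the scalar $J_{-2}^{-1}$, and since $g:=J_2\,f(\varphi(x),u')$ is $\Hk$-valued in $u$ for each $x$, the structural observation gives $P_k^+\Dtwo\,g=(R_kA_k)_{x,u}\,g$; on the right, writing $\mathcal{D}_{2,x',u'}f(x',u')=(R_kA_k)_{x',u'}f(x',u')+(Q_kB_k)_{x',u'}f(x',u')$, the projection $P_k^+$ (acting in $u$, which by the equivariance of the splitting acts as $P_{k,u'}^+$ after the substitution) annihilates the $u'\Mkk$-valued second summand and fixes the first. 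This yields $J_{-2}^{-1}(R_kA_k)_{x,u}J_2\,f(\varphi(x),u')=(R_kA_k)_{x',u'}f(x',u')$, and replacing $P_k^+$ by $P_k^-$ throughout gives the analogous identity for $Q_kB_k$.

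The hard part will be the covariance statement used in the second step: one must check carefully that $P_k^{\pm}$ (equivalently $D_u$) commute with the substitution $u\mapsto u'$, keeping track of the Clifford multiplications that occur because $\Mk$ is $\Clm$-valued rather than scalar-valued, so that bare $SO(m)$-equivariance is not quite enough and the explicit form \eqref{Pk+}--\eqref{Pk-} of the projections together with the homogeneity degree $k$ must be used. Finally, I would point out the alternative proof anticipated in the introduction to this section: using Proposition~\ref{fund}, which identifies $H_kA_{k,r}=E_k$ and $H_kB_{k,r}=F_k$, together with the known conformal behaviour of the fundamental solutions $E_k$, $F_k$, $H_k$, one realizes $R_kA_k$ and $Q_kB_k$ (up to constants) as convolution operators against $E_k$ and $F_k$ and deduces their conformal invariance from that of the Rarita--Schwinger operators $R_k$ and $Q_k$; these convolution operators are Knapp--Stein intertwining operators, cf.\ \cite{Clerc,KS} and Section~$4.1$ of \cite{Ding3}.
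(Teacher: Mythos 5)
Your strategy coincides with the paper's: both split the intertwining relation of Theorem \ref{interDtwo} along the Almansi--Fischer decomposition, using that $R_kA_k$ and $Q_kB_k$ are exactly the $\Mk$- and $u\Mkk$-valued components of $\Dtwo$ acting on $C^{\infty}(\Rm,\Hk)$. The paper phrases the splitting as matching components in the direct sum $\Hk=\Mk\oplus u\Mkk$; you phrase it as applying $P_k^{\pm}$ to both sides. These are equivalent, and in both formulations everything reduces to the single claim you postpone: that $P_k^{\pm}$ commute with the substitution $u\mapsto u'=(cx+d)u(\widetilde{cx+d})/\|cx+d\|^2$, i.e.\ that this substitution preserves the summands $\Mk$ and $u\Mkk$ inside $\Hk$. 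Indeed, once Theorem \ref{interDtwo} and $\Dtwo=R_kA_k+Q_kB_k$ are granted, the two asserted identities are literally equivalent to that commutation (take, say, $c=0$ so that $u'$ is a fixed rotation of $u$ and $f(x',u')=\|x'\|^2h(u')$ with $h\in\Hk$ arbitrary), so deferring it defers the entire content of the theorem.

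This is a genuine gap, and the justification you sketch for it does not work; your own caveat is exactly on target. For $\Clm$-valued polynomials the bare substitution $p(u)\mapsto p(au\tilde{a})$, $a\in Spin(m)$, does \emph{not} preserve $\Mk$: for instance $p(u)=u_1e_3+u_3e_1\in\mathcal{M}_1$, but after the rotation sending $e_1\mapsto e_2$, $e_2\mapsto -e_1$ the substituted polynomial is $-u_2e_3+u_3e_1$, and $D_u(-u_2e_3+u_3e_1)=-e_2e_3+e_3e_1\neq 0$. The Almansi--Fischer splitting is equivariant only for the twisted action $p(u)\mapsto \tilde{a}\,p(au\tilde{a})$, whose conformal counterpart carries Clifford-valued rather than scalar weights (compare the Rarita--Schwinger intertwining factors $\widetilde{cx+d}/\|cx+d\|^{m}$ with the scalar $J_{\pm 2}$ used here). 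So the ``hard part'' you flag is not routine bookkeeping: it is precisely what must be supplied, and nothing in your argument establishes it; to be fair, the paper's own proof relies on the same unproved assertion when it places $(R_kA_k)_{x',u'}f(x',u')$ and $(Q_kB_k)_{x',u'}f(x',u')$, regarded as functions of $(x,u)$, in $C^{\infty}(\Rm,\Mk)$ and $C^{\infty}(\Rm,u\Mkk)$ before equating like components. Your closing remark about the alternative route via Proposition \ref{fund} and Knapp--Stein type convolution operators matches the paper's own remark at the end of this section, but there as here it is only a sketch, not a substitute for the missing equivariance.
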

\begin{proof}
From Theorem \ref{interDtwo}, for $f(x,u)\in C^{\infty}(\Rm,\Hk)$, we have
\be
J_{-2}^{-1}(\varphi,x)\mathcal{D}_{2,x,u}J_2(\varphi,x)f(\varphi(x),\frac{(cx+d)u(\widetilde{cx+d})}{||cx+d||^2})=\mathcal{D}_{2,x',u'}f(x',u').
\ee
Since $\Dtwo=R_kA_k+Q_kB_k$, the previous equation becomes
\begin{eqnarray}
&&J_{-2}^{-1}(\varphi,x)(R_kA_k)_{x,u}J_2(\varphi,x)f(\varphi(x),\frac{(cx+d)u(\widetilde{cx+d})}{||cx+d||^2})\nonumber\\
&&+J_{-2}^{-1}(\varphi,x)(Q_kB_k)_{x,u}J_2(\varphi,x)f(\varphi(x),\frac{(cx+d)u(\widetilde{cx+d})}{||cx+d||^2})\nonumber\\
&=&(R_kA_k)_{x',u'}f(x',u')+(Q_kB_k)_{x',u'}f(x',u').\label{decomp}
\end{eqnarray}
From Theorem \ref{interDtwo}, we know that $J_{2}(\varphi,x)f(\varphi(x),\displaystyle\frac{(cx+d)u(\widetilde{cx+d})}{||cx+d||^2})\in C^{\infty}(\Rm,\Hk)$. Then with the help of (\ref{RkAk}) and (\ref{QkBk}), we obtain
\be
&&J_{-2}^{-1}(\varphi,x)(R_kA_k)_{x,u}J_2(\varphi,x)f(\varphi(x),\frac{(cx+d)u(\widetilde{cx+d})}{||cx+d||^2})\in C^{\infty}(\Rm,\Mk),\\
&&J_{-2}^{-1}(\varphi,x)(Q_kB_k)_{x,u}J_2(\varphi,x)f(\varphi(x),\frac{(cx+d)u(\widetilde{cx+d})}{||cx+d||^2})\in C^{\infty}(\Rm,u\Mkk),\\
&&(R_kA_k)_{x',u'}f(x',u')\in C^{\infty}(\Rm,\Mk),\ (Q_kB_k)_{x',u'}f(x',u')\in C^{\infty}(\Rm,u\Mkk).
\ee
Meanwhile, from the Almansi-Fischer decomposition of $\Hk$ (see (\ref{Almansi})), we know that 
$$C^{\infty}(\Rm,\Mk)\cap C^{\infty}(\Rm,u\Mkk)=\emptyset.$$ Therefore, in equation (\ref{decomp}), the functions on both sides, which belong to the same function space, are equal to each other. In other words,
\be
&&J_{-2}^{-1}(\varphi,x)(R_kA_k)_{x,u}J_2(\varphi,x)f(\varphi(x),\frac{(cx+d)u(\widetilde{cx+d})}{||cx+d||^2})=(R_kA_k)_{x',u'}f(x',u'),\\
&&J_{-2}^{-1}(\varphi,x)(Q_kB_k)_{x,u}J_2(\varphi,x)f(\varphi(x),\frac{(cx+d)u(\widetilde{cx+d})}{||cx+d||^2})=(Q_kB_k)_{x',u'}f(x',u'),
\ee
which completes the proof.
\end{proof}
We point out that there is an alternative approach for the conformally invariant property for $R_kA_k$ and $Q_kB_k$. We notice that equation (\ref{RkAkreconstruct}) and (\ref{QkBkreconstruct}) provide us integrals to reproduce $P_k^+f(y,v)$ and $P_k^-f(y,v)$. The integrals of the form
\be
\int\displaylimits_{\Rm}\big(E_k(x-y,u,v),f(x,u)\big)_u
\ee
are called convolution type operators generated by the fundamental solution $E_k(x,u,v)$ in \cite{Ding3}. These can be considered as Knapp-Stein intertwining operators with respect to a principal series representation of the Spin group. See \cite{Clerc,KS} for more details. Further, we have already seen the intertwining operators for $\Dtwo$ in Theorem {\ref{interDtwo}}. With similar argument as in Section $4.1$ in \cite{Ding3}, we can also have Theorem \ref{Intertwining}.
%%%%%%%%%%%%%%%%%%%%%%      Integral formulas
\section{Integral formulas for the higher spin Laplace operator}
With the decomposition of $\Dtwo$ obtained in Section $4$ and the Stokes' theorem for the Rarita-Schwinger operators, we will establish a Borel-Pompeiu type formula for the higher spin Laplace operator in this section.
\begin{theorem}\textbf{(Borel-Pompeiu type formula)}\label{BorelPompeiu}\\
Let $\Omega$ and $\Omega'$ be domains in $\Rm$ and suppose the closure of $\Omega$ lies in $\Omega'$. Further suppose the closure of $\Omega$ is compact and $\partial\Omega$ is piecewise smooth. Let $f(x,u)\in C^{\infty}(\Rm,\Hk)$ and $y\in\Omega$, then we have
\be
&&\int\displaylimits_{\partial\Omega}\big(H_k(x-y,u,v)(P_k^+-\frac{2P_k^-}{m+2k-4}),d\sigma_xR_kP_k^+f(x,u)\big)_u\\
&&+\int\displaylimits_{\partial\Omega}\big(E_k(x-y,u,v),d\sigma_xP_k^+f(x,u)\big)_u\\
&&+\int\displaylimits_{\partial\Omega}\big(H_k(x-y,u,v)(\frac{2P_k^+}{m+2k-4}+\frac{m+2k}{m+2k-4}P_k^-),d\sigma_xQ_kP_k^-f(x,u)\big)_u\\
&&+\int\displaylimits_{\partial\Omega}\big(F_k(x-y,u,v),d\sigma_xP_k^-f(x,u)\big)_u\\
&=&-\int\displaylimits_{\Omega}\big(H_k(x-y,u,v),\Dtwo f(x,u)\big)_udx^m+f(y,v),
\ee
where $dx^m=dx_1\wedge\cdots\wedge dx_m$, $d\sigma_x=n(x)d\sigma(x)$, $\sigma$ is scalar Lebesgue measure on $\partial\Omega$ and $n(x)$ is unit outer normal vector to $\partial\Omega$. 
\end{theorem}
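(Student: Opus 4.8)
The plan is to reduce the Borel-Pompeiu formula to an application of the various Stokes' theorems from Section 3, using the decomposition $\Dtwo = R_kA_k + Q_kB_k$ from Section 4 together with the identification of fundamental solutions from Proposition \ref{fund}. First I would fix $y\in\Omega$ and a small ball $B(y,\epsilon)\subset\Omega$, and apply Stokes' theorems on the region $\Omega\setminus \overline{B(y,\epsilon)}$, where $H_k(x-y,u,v)$, $E_k(x-y,u,v)$, $F_k(x-y,u,v)$ are all smooth. The starting point is the splitting
\begin{eqnarray*}
\int\displaylimits_{\Omega\setminus\overline{B(y,\epsilon)}}\big(H_k(x-y,u,v),\Dtwo f(x,u)\big)_u\,dx^m
&=&\int\displaylimits_{\Omega\setminus\overline{B(y,\epsilon)}}\big(H_k,R_kA_kf\big)_u\,dx^m\\
&&+\int\displaylimits_{\Omega\setminus\overline{B(y,\epsilon)}}\big(H_k,Q_kB_kf\big)_u\,dx^m,
\end{eqnarray*}
where I write $A_kf = -R_kP_k^+f + \frac{2}{m+2k-4}T_kP_k^-f \in C^{\infty}(\Rm,\Mk)$ and $B_kf \in C^{\infty}(\Rm,u\Mkk)$. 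The idea is to peel off one Rarita-Schwinger-type operator at a time: apply Stokes' theorem for $R_k$ (Theorem \ref{StokesRk}) to move $R_k$ from $A_kf$ onto $H_k$ (where it acts from the right, producing $H_kA_{k,r}R_k$, which by Proposition \ref{fund} and its proof reproduces the $\delta$-distribution), and similarly apply Stokes' theorem for $Q_k$ to the second term. What survives after moving $R_k$ (resp. $Q_k$) onto $H_k$ is a term involving $H_k A_{k,r}$ paired with $R_k P_k^+ f$ (resp. $H_k B_{k,r}$ with $Q_k P_k^- f$) plus boundary integrals; by Proposition \ref{fund}, $H_k A_{k,r} = E_k$ and $H_k B_{k,r} = F_k$, which is exactly where the $E_k$ and $F_k$ boundary integrals in the statement come from.

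Next I would unpack $A_kf$ and $B_kf$ themselves. Since $A_kf$ is built from $R_kP_k^+f$ and $T_kP_k^-f$ while $B_kf$ involves $T_k^*P_k^+f$ and $Q_kP_k^-f$, I apply the remaining Stokes' theorems — Theorem \ref{StokesTk} for $T_k$ and $T_k^*$, Theorem \ref{StokesRk} again, and the $Q_k$ version — to each piece, always moving the differential operator off the $f$-factor and onto the (now $E_k$- or $F_k$- or $H_k$-valued) kernel. The bookkeeping is organized so that the kernel-side operators assemble back into the combinations $P_k^+ - \frac{2}{m+2k-4}P_k^-$ and $\frac{2}{m+2k-4}P_k^+ + \frac{m+2k}{m+2k-4}P_k^-$ appearing in the statement: these are precisely the adjoints (under $(\,\cdot\,,\cdot\,)_u$, via the projection structure of the Almansi-Fischer decomposition) of the coefficient operators inside $A_k$ and $B_k$ respectively, together with the relations $T_kQ_k = -R_kT_k$ and $T_k^*R_k = -Q_kT_k^*$ quoted at the end of Section 4. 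Throughout, I would use Lemma \ref{Ortho} repeatedly to kill cross terms between $\Mk$-valued and $u\Mkk$-valued factors, exactly as in the proof of Proposition \ref{fund}; this is what ensures, e.g., that $H_k A_{k,r}$ only ever pairs nontrivially with the $P_k^+$-component and $H_k B_{k,r}$ with the $P_k^-$-component.

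Finally I would handle the limit $\epsilon\to 0$. The volume integral over $\Omega\setminus\overline{B(y,\epsilon)}$ converges to the one over $\Omega$ since $\Dtwo f$ is smooth and $H_k(x-y,u,v)$ is locally integrable (it behaves like $\|x-y\|^{2-m}$). The boundary integrals over $\partial\Omega$ are unaffected. On the inner sphere $\partial B(y,\epsilon)$, the boundary terms involving $E_k$ and $F_k$ vanish as $\epsilon\to 0$ (these kernels are $O(\|x-y\|^{1-m})$, integrated against a sphere of surface area $O(\epsilon^{m-1})$ times a smooth factor, so they are $O(\epsilon)$), while the terms where the kernel carries the full combination $H_k(\cdots)R_kP_k^+f$ and $H_k(\cdots)Q_kP_k^-f$ — i.e., where one Rarita-Schwinger operator has already been applied to $H_k$ — produce the reproducing-kernel $\delta$-contribution, collapsing to $P_k^+f(y,v) + P_k^-f(y,v) = f(y,v)$ by the reproducing property of $Z_k^2(u,v)$ and the identities from Proposition \ref{fund}. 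Collecting everything and moving the $-\int_\Omega(H_k,\Dtwo f)_u\,dx^m$ term to the right gives the stated identity.

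The main obstacle I expect is the second step: keeping the projection-operator bookkeeping consistent when successively transposing $R_k$, $Q_k$, $T_k$, $T_k^*$ across the inner product $(\,\cdot\,,\cdot\,)_u$. One must be careful that ``$g R_k$'' means the right Rarita-Schwinger operator (as flagged after Theorem \ref{StokesRk}), that the coefficients $\tfrac{2}{m+2k-4}$, $\tfrac{m+2k}{m+2k-4}$ propagate correctly from $A_k$, $B_k$ and from the $-\tfrac{2}{m+2k-4}$, $-\tfrac{m+2k}{m+2k-4}$ coefficients of $Q_kT_k^*$ and $Q_k^2$ in $\Dtwo$, and that the sign changes from $T_kQ_k = -R_kT_k$ and $T_k^*R_k = -Q_kT_k^*$ land in the right place. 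Verifying that the kernel-side operators reassemble exactly into $P_k^+ - \tfrac{2}{m+2k-4}P_k^-$ and $\tfrac{2}{m+2k-4}P_k^+ + \tfrac{m+2k}{m+2k-4}P_k^-$, rather than some transposed or rescaled variant, is the delicate computational heart of the argument; everything else is a routine $\epsilon$-sphere limit of the kind familiar from the classical Borel-Pompeiu formula.
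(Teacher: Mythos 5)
Your overall strategy is essentially the paper's proof run in the opposite direction: the paper starts from the four boundary integrals over $\partial\Omega$, applies the Stokes' theorems for $R_k$, $T_k$, $T_k^*$, $Q_k$ twice on the punctured domain $\Omega\setminus \overline{B_r(y)}$, uses Lemma \ref{Ortho} to kill cross terms, uses Proposition \ref{fund} to recognize $H_kA_{k,r}=E_k$ and $H_kB_{k,r}=F_k$ (together with $E_kR_k=0$, $F_kQ_k=0$ away from the singularity), reassembles the volume terms into $-\int(H_k,\Dtwo f)_u$ via (\ref{D21}), and then lets $r\to 0$. Your plan to start instead from $\int_{\Omega\setminus\overline{B_\epsilon}}(H_k,\Dtwo f)_u\,dx^m$ and integrate by parts twice is the same computation with the same ingredients, so the structural part of your proposal is sound (modulo the projection bookkeeping you correctly flag, e.g.\ inserting $H_kP_k^{\pm}$ so that the hypotheses of each Stokes' theorem are met).

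However, your treatment of the $\epsilon\to 0$ limit contains a genuine error. You claim the inner-sphere boundary terms involving $E_k$ and $F_k$ are $O(\epsilon)$ and vanish: since these kernels are $O(\|x-y\|^{1-m})$ and the sphere has area $O(\epsilon^{m-1})$, the correct estimate is $O(1)$, not $O(\epsilon)$ --- and indeed these integrals do not vanish; they are exactly the source of the term $f(y,v)$, via $\int_{\partial B_\epsilon}(E_k,d\sigma_xP_k^+f)_u\to P_k^+f(y,v)$ and $\int_{\partial B_\epsilon}(F_k,d\sigma_xP_k^-f)_u\to P_k^-f(y,v)$ (the paper's equation (\ref{one})). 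The terms that do vanish are those whose kernel is the undifferentiated $H_k\sim\|x-y\|^{2-m}$, which give $O(\epsilon)$. Your next sentence attributes the $\delta$-contribution to the terms ``where one Rarita-Schwinger operator has already been applied to $H_k$,'' which by Proposition \ref{fund} are precisely $E_k$ and $F_k$, so your write-up is internally inconsistent at the decisive step. Moreover, this limit does not follow from ``the reproducing property of $Z_k^2$'' or from Proposition \ref{fund} alone (on the punctured domain one has $E_kR_k=0$, so no $\delta$ appears in the volume term); it requires the explicit forms of $E_k$, $F_k$ in terms of $Z_k^1$, $Z_{k-1}^1$ together with the averaging identity of Lemma \ref{lemmaharmonic}, as in the paper. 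Fixing the proposal means swapping which inner-sphere terms vanish and which survive, and supplying the Lemma \ref{lemmaharmonic} argument for the surviving ones.
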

Before we prove the theorem above, we remind the reader that, in the Stokes' Theorem for the Rarita-Schwinger type operators in Section $3$, the function spaces in these theorems are different. More specifically, for instance, the Stokes' theorem for $R_k$ requires that $f(x,u),g(x,u)\in C^1(\Rm,\Mk)$ but the Stokes' theorem for $Q_k$ requires that $f(x,u),g(x,u)\in C^1(\Rm,u\Mkk)$. However, in the theorem above, we notice
$$H_k(x-y,u,v),\ f(x,u)\in C^{\infty}(\Rm,\Hk).$$
Hence, in order to apply the Stokes' theorem for the Rarita-Schwinger type operators, we have to break them into $C^{\infty}(\Rm,\Mk)$ and $C^{\infty}(\Rm,u\Mkk)$ via the Almansi-Fischer decomposition for $\Hk$. That is why $P_k^+$ and $P_k^-$ are involved in the theorem.
\begin{proof}
We first consider the first two integrals on the left hand side. From Proposition \ref{fund}, we have
$$E_k(x,u,v)=H_k(x,u,v)A_{k,r},$$
where
$$A_{k,r}=-P_k^+R_k+\displaystyle\frac{2P_k^-T_k}{m+2k-4}.$$
For convenience, we let
$$E_k=E_k(x-y,u,v),\ F_k=F_k(x-y-u-v),\ H_k=H_k(x-y,u,v),$$
unless it is necessary to specify their dependence on the variables. Hence, the first two integrals can be rewritten as follows.
\be
&&\int\displaylimits_{\partial\Omega}\big(H_k(P_k^+-\frac{2P_k^-}{m+2k-4}),d\sigma_xR_kP_k^+f(x,u)\big)_u+\int\displaylimits_{\partial\Omega}\big(E_k,d\sigma_xP_k^+f(x,u)\big)_u\\
%&=&\int\displaylimits_{\partial\Omega}\big(H_k(x-y,u,v)(P_k^+-\frac{2P_k^-}{m+2k-4}),d\sigma_xR_kP_k^+f(x,u)\big)_u\\
%&&+\int\displaylimits_{\partial\Omega}\big(H_k(x-y,u,v)A_{k,r},d\sigma_xP_k^+f(x,u)\big)_u\\
&=&\int\displaylimits_{\partial\Omega}\big(H_kP_k^+,d\sigma_xR_kP_k^+f(x,u)\big)_u-\frac{2}{m+2k-4}\int\displaylimits_{\partial\Omega}\big(H_kP_k^-,d\sigma_xR_kP_k^+f(x,u)\big)_u\\
&&-\int\displaylimits_{\partial\Omega}\big(H_kP_k^+R_k,d\sigma_xP_k^+f(x,u)\big)_u+\frac{2}{m+2k-4}\int\displaylimits_{\partial\Omega}\big(H_kP_k^-T_k,d\sigma_xP_k^+f(x,u)\big)_u.
\ee
Let $B_r=\{x:||x-y||<r\}\subset\Omega$, with some $r>0$, and $B_r^c=\Omega\backslash B_r$. Then we apply the Stokes' theorem for $T_k$ to the second integral and the Stokes' theorem for $R_k$ to the other three integrals. The previous equation becomes
\begin{eqnarray}
&=&\int\displaylimits_{\partial B_r}\big(H_kP_k^+,d\sigma_xR_kP_k^+f(x,u)\big)_u+\int\displaylimits_{B_r^c}\big(H_kP_k^+R_k,R_kP_k^+f(x,u)\big)_u\nonumber\\
&&+\int\displaylimits_{B_r^c}\big(H_kP_k^+,R_k^2P_k^+f(x,u)\big)_u-\frac{2}{m+2k-4}\bigg(\int\displaylimits_{\partial B_r}\big(H_kP_k^-,d\sigma_xR_kP_k^+f(x,u)\big)_u\nonumber\\
&&-\int\displaylimits_{B_r^c}\big(H_kP_k^-T_k,R_kP_k^+f(x,u)\big)_u-\int\displaylimits_{B_r^c}\big(H_kP_k^-,T_k^*R_kP_k^+f(x,u)\big)_u\bigg)\nonumber\\
&&-\int\displaylimits_{\partial B_r}\big(H_kP_k^+R_k,d\sigma_xP_k^+f(x,u)\big)_u-\int\displaylimits_{B_r^c}\big(H_kP_k^+R_k,R_kP_k^+f(x,u)\big)_u\nonumber\\
&&-\int\displaylimits_{B_r^c}\big(H_kP_k^+R_k^2,P_k^+f(x,u)\big)_u+\frac{2}{m+2k-4}\bigg(\int\displaylimits_{\partial B_r}\big(H_kP_k^-T_k,d\sigma_xP_k^+f(x,u)\big)_u\nonumber\\
&&+\int\displaylimits_{B_r^c}\big(H_kP_k^-T_kR_k,P_k^+f(x,u)\big)_u+\int\displaylimits_{B_r^c}\big(H_kP_k^-T_k,R_kP_k^+f(x,u)\big)_u\bigg).\nonumber
\end{eqnarray}
With four integrals cancelled above, it becomes
\begin{eqnarray}
&=&\int\displaylimits_{\partial B_r}\big(H_kP_k^+,d\sigma_xR_kP_k^+f(x,u)\big)_u-\frac{2}{m+2k-4}\int\displaylimits_{\partial B_r}\big(H_kP_k^-,d\sigma_xR_kP_k^+f(x,u)\big)_u\nonumber\\
&&-\int\displaylimits_{\partial B_r}\big(H_kP_k^+R_k,d\sigma_xP_k^+f(x,u)\big)_u+\frac{2}{m+2k-4}\int\displaylimits_{\partial B_r}\big(H_kP_k^-T_k,d\sigma_xP_k^+f(x,u)\big)_u\nonumber\\
&&+\int\displaylimits_{B_r^c}\big(H_kP_k^+,R_k^2P_k^+f(x,u)\big)_u-\frac{2}{m+2k-4}\int\displaylimits_{B_r^c}\big(H_kP_k^-,T_k^*R_kP_k^+f(x,u)\big)_u\nonumber\\
&&-\int\displaylimits_{B_r^c}\big(H_kP_k^+R_k^2,P_k^+f(x,u)\big)_u+\frac{2}{m+2k-4}\int\displaylimits_{B_r^c}\big(H_kP_k^-T_kR_k,P_k^+f(x,u)\big)_u.\label{equation2}
\end{eqnarray}
Recall that
\be
H_k(x-y,u,v)=\displaystyle\frac{(m+2k-4)\Gamma(\displaystyle\frac{m}{2}-1)}{4(4-m)\pi^{\frac{m}{2}}}||x-y||^{2-m}Z_k^2(\displaystyle\frac{(x-y)u(x-y)}{||x-y||^2},v)
\ee
and $P_k^+$ and $P_k^-$ are independent with respect to the variable $x$. Hence, from the homogeneity of $x-y$ in $H_k(x-y,u,v)$, we know that 
\be
\int\displaylimits_{\partial B_r}\big(H_kP_k^+,d\sigma_xR_kP_k^+f(x,u)\big)_u\longrightarrow 0,\ \int\displaylimits_{\partial B_r}\big(H_kP_k^-,d\sigma_xR_kP_k^+f(x,u)\big)_u\longrightarrow 0,
\ee
when $r$ approaches zero. Here, we give the details for the first integral approaching zero, the second can be derived from similar arguments. For convenience, we ignore the constant in $H_k(x-y,u,v)$. We also remind the reader that we will see similar statements later in this section.
\be
&&\int\displaylimits_{\partial B_r}\big(H_kP_k^+,d\sigma_xR_kP_k^+f(x,u)\big)_u\\
&&=\int\displaylimits_{\partial{B_r}}\int\displaylimits_{\Sm}||x-y||^{2-m}Z_k^2(\displaystyle\frac{(x-y)u(x-y)}{||x-y||^2},v)P_k^+n(x)R_kP_k^+f(x,u)dS(u)d\sigma(x).
\ee
Here, the outer normal vector $n(x)=\displaystyle\frac{y-x}{||x-y||}$. Let $x-y=r\zeta$, where $\zeta\in\Sm$. The equation above becomes
\be
&&\int\displaylimits_{\Sm}\int\displaylimits_{\Sm}r^{2-m}Z_k^2(\zeta u\zeta,v)P_k^+(-\zeta)R_kP_k^+f(y+r\zeta,u)r^{m-1}dS(u)dS(\zeta)\\
&=&-\int\displaylimits_{\Sm}\int\displaylimits_{\Sm}rZ_k^2(\zeta u\zeta,v)P_k^+\zeta R_kP_k^+f(y+r\zeta,u)dS(u)dS(\zeta),
\ee
where $r^{m-1}$ above comes from the Jacobian of the change of variable. Since $f(x,u)\in C^{\infty}(\Rm,\Hk)$ and $Z_k^2(u,v)$ is the reproducing kernel of $\Hk$, then 
$$Z_k^2(\zeta u\zeta,v)P_k^+\zeta R_kP_k^+f(y+r\zeta,u)$$ is bounded for $\zeta\in \Sm$ and $u\in\Sm$. Therefore, the previous integral goes to zero when $r$ goes to zero.\\
\par
On the other hand, from Lemma \ref{Ortho}, we observe that
$H_kP_k^-T_k\perp T_k^*R_kP_k^+f(x,u)$
with respect to $(\ ,\ )_u$. Therefore, equation (\ref{equation2}) becomes
\begin{eqnarray}
&=&\int\displaylimits_{\partial B_r}\big(H_k(-P_k^+R_k+\frac{2}{m+2k-4}P_k^-T_k),d\sigma_xP_k^+f(x,u)\big)_u\nonumber\\
&&+\int\displaylimits_{B_r^c}\big(H_kP_k^++H_kP_k^-,(R_k^2-\frac{2}{m+2k-4}T_k^*R_k)P_k^+f(x,u)\big)_u\nonumber\\
&&+\int\displaylimits_{B_r^c}\big(H_k(-P_k^+R_k^2+\frac{2}{m+2k-4}P_k^-T_kR_k),P_k^+f(x,u)\big)_u\nonumber\\
&=&\int\displaylimits_{\partial B_r}\big(H_kA_{k,r},d\sigma_xP_k^+f(x,u)\big)_u+\int\displaylimits_{B_r^c}\big(H_k,(R_k^2-\frac{2}{m+2k-4}T_k^*R_k)P_k^+f(x,u)\big)_u\nonumber\\
&&+\int\displaylimits_{B_r^c}\big(H_kA_{k,r}R_k,P_k^+f(x,u)\big)_u\nonumber\\
&=&\int\displaylimits_{\partial B_r}\big(E_k,d\sigma_xP_k^+f(x,u)\big)_u+\int\displaylimits_{B_r^c}\big(H_k,(R_k^2P_k^+-\frac{2}{m+2k-4}T_k^*R_kP_k^+)f(x,u)\big)_u.\label{firsttwoterms}
\end{eqnarray}
The last equation comes from
\be
H_k(x-y,u,v)A_{k,r}R_k=E_k(x-y,u,v)R_k=0,\ for\ x\in B_r^c.
\ee

Similar argument applies for the last two integrals on the left hand side in Theorem \ref{BorelPompeiu}. With
$$F_k(x-y,u,v)=H_k(x-y,u,v)B_{k,r},$$
where
$$B_{k,r}=-\displaystyle\frac{2P_k^+T_k^*}{m+2k-4}-\displaystyle\frac{(m+2k)P_k^-Q_k}{m+2k-4},$$
the last two integrals in Theorem \ref{BorelPompeiu} can be rewritten as follows.
\be
&&\int\displaylimits_{\partial\Omega}\big(H_k(\frac{2P_k^+}{m+2k-4}+\frac{m+2k}{m+2k-4}P_k^-),d\sigma_xQ_kP_k^-f(x,u)\big)_u+\int\displaylimits_{\partial\Omega}\big(F_k,d\sigma_xP_k^-f(x,u)\big)_u\\
%&=&\int\displaylimits_{\partial\Omega}\big(H_k(\frac{2P_k^+}{m+2k-4}+\frac{m+2k}{m+2k-4}P_k^-),d\sigma_xQ_kP_k^-f(x,u)\big)_u\\
%&&+\int\displaylimits_{\partial\Omega}\big(H_kB_{k,r},d\sigma_xP_k^-f(x,u)\big)_u\\
&=&\frac{2}{m+2k-4}\bigg(\int\displaylimits_{\partial\Omega}\big(H_kP_k^+,d\sigma_xQ_kP_k^-f(x,u)\big)_u-\int\displaylimits_{\partial\Omega}\big(H_kP_k^+T_k^*,d\sigma_xP_k^-f(x,u)\big)_u\bigg)\\
&&+\frac{m+2k}{m+2k-4}\bigg(\int\displaylimits_{\partial\Omega}\big(H_kP_k^-,d\sigma_xQ_kP_k^-f(x,u)\big)_u-\int\displaylimits_{\partial\Omega}\big(H_kP_k^-Q_k,d\sigma_xP_k^-f(x,u)\big)_u\bigg).
\ee
Applying the Stokes' theorem for $Q_k$ and $T_k$ to the equation above, we have
\begin{eqnarray}
&=&\frac{2}{m+2k-4}\bigg[\int\displaylimits_{\partial B_r}\big(H_kP_k^+,d\sigma_xQ_kP_k^-f(x,u)\big)_u+\int\displaylimits_{B_r^c}\big(H_kP_k^+T_k^*,Q_kP_k^-f(x,u)\big)_u\nonumber\\
&&+\int\displaylimits_{B_r^c}\big(H_kP_k^+,T_kQ_kP_k^-f(x,u)\big)_u-\int\displaylimits_{\partial B_r}\big(H_kP_k^+T_k^*,d\sigma_xP_k^-f(x,u)\big)_u\nonumber\\
&&+\int\displaylimits_{B_r^c}\big(H_kP_k^+T_k^*Q_k,P_k^-f(x,u)\big)_u+\int\displaylimits_{B_r^c}\big(H_kP_k^+T_k^*,Q_kP_k^-f(x,u)\big)_u\bigg]\nonumber\\
&&+\frac{m+2k}{m+2k-4}\bigg[\int\displaylimits_{\partial B_r}\big(H_kP_k^-,d\sigma_xQ_kP_k^-f(x,u)\big)_u+\int\displaylimits_{B_r^c}\big(H_kP_k^-Q_k,Q_kP_k^-f(x,u)\big)_u\nonumber\\
&&+\int\displaylimits_{B_r^c}\big(H_kP_k^-,Q_k^2P_k^-f(x,u)\big)_u-\int\displaylimits_{\partial B_r}\big(H_kP_k^-Q_k,d\sigma_xP_k^-f(x,u)\big)_u\nonumber\\
&&+\int\displaylimits_{B_r^c}\big(H_kP_k^-Q_k^2,P_k^-f(x,u)\big)_u+\int\displaylimits_{B_r^c}\big(H_kP_k^-Q_k,Q_kP_k^-f(x,u)\big)_u\bigg].\nonumber
\end{eqnarray}
With four integrals cancelled above, the previous equation becomes
\begin{eqnarray}
&&\frac{2}{m+2k-4}\bigg[\int\displaylimits_{\partial B_r}\big(H_kP_k^+,d\sigma_xQ_kP_k^-f(x,u)\big)_u-\int\displaylimits_{\partial B_r}\big(H_kP_k^+T_k^*,d\sigma_xP_k^-f(x,u)\big)_u\nonumber\\
&&+\int\displaylimits_{B_r^c}\big(H_kP_k^+,T_kQ_kP_k^-f(x,u)\big)_u-\int\displaylimits_{B_r^c}\big(H_kP_k^+T_k^*Q_k,P_k^-f(x,u)\big)_u\bigg]\nonumber\\
&&+\frac{m+2k}{m+2k-4}\bigg[\int\displaylimits_{\partial B_r}\big(H_kP_k^-,d\sigma_xQ_kP_k^-f(x,u)\big)_u-\int\displaylimits_{\partial B_r}\big(H_kP_k^-Q_k,d\sigma_xP_k^-f(x,u)\big)_u\nonumber\\
&&+\int\displaylimits_{B_r^c}\big(H_kP_k^-,Q_k^2P_k^-f(x,u)\big)_u-\int\displaylimits_{B_r^c}\big(H_kP_k^-Q_k^2,P_k^-f(x,u)\big)_u\bigg].\label{equation3}
\end{eqnarray}
Similarly, from the homogeneity of $x-y$ in $H_k(x-y,u,v)$, we have
\be
\int\displaylimits_{\partial B_r}\big(H_kP_k^+,d\sigma_xQ_kP_k^-f(x,u)\big)_u\longrightarrow 0,\ \int\displaylimits_{\partial B_r}\big(H_kP_k^-,d\sigma_xQ_kP_k^-f(x,u)\big)_u\longrightarrow 0,
\ee
when $r$ approaches zero. From Lemma \ref{Ortho}, we have
\be
H_kP_k^+\perp Q_k^2P_k^-f(x,u),\ H_kP_k^-\perp T_kQ_kP_k^-f(x,u),
\ee
with respect to $(\ ,\ )_u$. Therefore, equation (\ref{equation3}) becomes
\begin{eqnarray}
&=&\int\displaylimits_{\partial B_r}\big(H_k(-\displaystyle\frac{2}{m+2k-4}P_k^+T_k^*-\displaystyle\frac{m+2k}{m+2k-4}P_k^-Q_k),d\sigma_xP_k^-f(x,u)\big)_u\nonumber\\
&&+\int\displaylimits_{B_r^c}\big(H_k(P_k^++P_k^-),(\frac{2T_kQ_k}{m+2k-4}+\frac{m+2k}{m+2k-4}Q_k^2)P_k^-f(x,u)\big)_u\nonumber\\
&&+\displaystyle\int\displaylimits_{B_r^c}\big(H_k(-\frac{2}{m+2k-4}P_k^+T_k^*Q_k-\frac{m+2k}{m+2k-4}P_k^-Q_k^2),P_k^-f(x,u)\big)_u\nonumber\\
&=&\int\displaylimits_{\partial B_r}\big(H_kB_{k,r},d\sigma_xP_k^-f(x,u)\big)_u+\int\displaylimits_{B_r^c}\big(H_k,(\frac{2T_kQ_k}{m+2k-4}+\frac{m+2k}{m+2k-4}Q_k^2)P_k^-f(x,u)\big)_u\nonumber\\
&&+\displaystyle\int\displaylimits_{B_r^c}\big(H_kB_{k,r}Q_k,P_k^-f(x,u)\big)_u\nonumber\\
&=&\int\displaylimits_{\partial B_r}\big(F_k,d\sigma_xP_k^-f(x,u)\big)_u+\int\displaylimits_{B_r^c}\big(H_k,(\frac{2T_kQ_k}{m+2k-4}+\frac{m+2k}{m+2k-4}Q_k^2)P_k^-f(x,u)\big)_u.\label{secondtwoterms}
\end{eqnarray}
The last equation comes from
\be
H_k(x-y,u,v)B_{k,r}Q_k=F_k(x-y,u,v)Q_k=0,\ for\ x\in B_r^c.
\ee
Combining (\ref{firsttwoterms}) and (\ref{secondtwoterms}), we have the left hand side in Theorem \ref{BorelPompeiu} is equal to
\be
&&\int\displaylimits_{\partial B_r}\big(E_k,d\sigma_xP_k^+f(x,u)\big)_u+\int\displaylimits_{B_r^c}\big(H_k,(R_k^2-\frac{2}{m+2k-4}T_k^*R_k)P_k^+f(x,u)\big)_u\\
&&+\int\displaylimits_{\partial B_r}\big(F_k,d\sigma_xP_k^-f(x,u)\big)_u+\int\displaylimits_{B_r^c}\big(H_k,(\frac{2T_kQ_k}{m+2k-4}+\frac{m+2k}{m+2k-4}Q_k^2)P_k^-f(x,u)\big)_u\\
&=&\int\displaylimits_{\partial B_r}\big(E_k,d\sigma_xP_k^+f(x,u)\big)_u+\int\displaylimits_{\partial B_r}\big(F_k,d\sigma_xP_k^-f(x,u)\big)_u-\int\displaylimits_{B_r^c}\big(H_k,\Dtwo f(x,u)\big)_u.
\ee
The last equation comes from (\ref{D21}). That is,
$$\Dtwo=-R_k^2P_k^++\frac{2T_k^*R_kP_k^+}{m+2k-4}-\frac{2T_kQ_kP_k^-}{m+2k-4}-\frac{(m+2k)Q_k^2P_k^-}{m+2k-4}.$$
 Now, we state that
\begin{eqnarray}
\int\displaylimits_{\partial B_r}\big(E_k,d\sigma_xP_k^+f(x,u)\big)_u=P_k^+f(y,v),\ 
\int\displaylimits_{\partial B_r}\big(F_k,d\sigma_xP_k^-f(x,u)\big)_u=P_k^-f(y,v),\label{one}
\end{eqnarray}
when $r$ approaches zero. If (\ref{one}) holds, then the previous equation becomes
\be
&=&P_k^+f(y,v)+P_k^-f(y,v)-\int\displaylimits_{B_r^c}\big(H_k,\Dtwo f(x,u)\big)_u\\
&=&f(y,v)-\int\displaylimits_{\Omega}\big(H_k,\Dtwo f(x,u)\big)_u,
\ee
which completes the proof of the theorem. The last equation comes from
\be
\int\displaylimits_{B_r}\big(H_k,\Dtwo f(x,u)\big)_u\longrightarrow 0,
\ee
when $r$ approaches zero because of the homogeneity of $x-y$ in $H_k(x-y,u,v)$. \\
\par
Now, we prove (\ref{one}). To accomplish this, we need the following lemma.
\begin{lemma}\cite{D}\label{lemmaharmonic}
Suppose $h_k:\ \Rm\longrightarrow \Clm$ is a harmonic polynomial homogeneous of degree $k$ and $m>2$. Suppose $u\in\Sm$ then
\be
\frac{1}{\omega_m}\int\displaylimits_{\Sm}h_k(xux)dS(x)=a_kh_k(u),
\ee
where $a_k=\displaystyle\frac{m-2}{m+2k-2}$.
\end{lemma}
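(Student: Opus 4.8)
The plan is to recognise the averaging map
$$\Phi:\ h_k(u)\ \longmapsto\ \frac{1}{\omega_m}\int_{\Sm}h_k(xux)\,dS(x)$$
as an $SO(m)$-equivariant endomorphism of the irreducible module $\Hk$, so that Schur's lemma forces $\Phi$ to be multiplication by a scalar, and then to pin down that scalar by a single explicit evaluation. Since $h_k$ is $\Clm$-valued we may expand $h_k=\sum_A h_{k,A}e_A$ with each $h_{k,A}$ a scalar (complex) harmonic polynomial homogeneous of degree $k$; because $\Phi$ acts only through the substitution $u\mapsto xux$ inside the polynomial, it commutes with this expansion, so it suffices to prove the identity for scalar-valued $h_k$, the constant $a_k$ being the same for every component.

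First I would verify that $\Phi$ maps $\Hk$ into $\Hk$. For a unit vector $x$ the Clifford relation gives $xux=u-2\langle x,u\rangle x$, so for fixed $x$ the assignment $u\mapsto xux$ is an orthogonal linear map (a reflection) of the variable; since the Laplacian commutes with orthogonal substitutions, $u\mapsto h_k(xux)$ is again harmonic and homogeneous of degree $k$, and averaging over $x\in\Sm$ preserves both properties. Next, for $R\in SO(m)$, the substitution $x\mapsto Rx$ in the integral, together with $\langle Rx,Ru\rangle=\langle x,u\rangle$ and the $SO(m)$-invariance of $dS$, gives $\Phi(h_k)(Ru)=\Phi(h_k\circ R)(u)$; that is, $\Phi$ intertwines the natural $SO(m)$-action on $\Hk$ with itself. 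As $\Hk$ is an absolutely irreducible $SO(m)$-module for $m>2$, Schur's lemma yields $\Phi=\lambda_k\,\mathrm{Id}$, and it remains to identify $\lambda_k$ with $a_k$.

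To compute $\lambda_k$ I would test $\Phi$ on the harmonic polynomial $h_k(u)=(u_1+iu_2)^k$, which is nonzero and has $h_k(e_1)=1$. From $xe_1x=e_1-2x_1x$ one gets
$$h_k(xe_1x)=\bigl(1-2x_1^2-2ix_1x_2\bigr)^k=\bigl((x_3^2+\cdots+x_m^2)-(x_1+ix_2)^2\bigr)^k .$$
Expanding by the binomial theorem, every term containing a positive power of $(x_1+ix_2)^2$ integrates to zero over $\Sm$, because $dS$ is invariant under rotation in the $(x_1,x_2)$-plane while $(x_1+ix_2)^{2j}$ acquires a factor $e^{2ij\theta}\neq1$ under such a rotation, the factor $(x_3^2+\cdots+x_m^2)^{k-j}$ being unchanged. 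Hence
$$\lambda_k=\Phi(h_k)(e_1)=\frac{1}{\omega_m}\int_{\Sm}(x_3^2+\cdots+x_m^2)^k\,dS(x).$$
Writing $x_3^2+\cdots+x_m^2=1-s$ with $s=x_1^2+x_2^2$ and using the standard reduction $\frac{1}{\omega_m}\int_{\Sm}\phi(x_1^2+x_2^2)\,dS=\frac{m-2}{2}\int_0^1\phi(s)(1-s)^{(m-4)/2}\,ds$, this becomes the elementary Beta integral $\frac{m-2}{2}\int_0^1(1-s)^{k+(m-4)/2}\,ds=\frac{m-2}{m+2k-2}=a_k$. Substituting back gives $\Phi(h_k)=a_k h_k$ for all $h_k\in\Hk$, which is the assertion.

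The substantive point is the equivariance/Schur step; the one place that needs a bit of care is the reduction to the scalar (complex) case together with the appeal to irreducibility of $\Hk$, which is exactly where the hypothesis $m>2$ enters. Once that framework is in place the evaluation of $\lambda_k$ is routine. A fully computational alternative — expanding a general $h_k$ in zonal harmonics and invoking the Funk--Hecke theorem — is available but messier, and I would not take that route.
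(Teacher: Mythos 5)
Your proposal is correct, but it cannot be "the same as the paper's proof" because the paper offers none: the lemma is imported verbatim from \cite{D}, so your argument is a self-contained substitute rather than a variant of an in-paper derivation. The steps all check out. For $x\in\Sm$ the Clifford relations give $xux=u-2\langle x,u\rangle x$, so $u\mapsto xux$ is an orthogonal substitution and your averaging map $\Phi$ preserves $\Hk$; the reduction to scalar components is legitimate since $xux$ is again a vector of $\Rm$, so the coefficients $e_A$ pass through the integral untouched; the identity $(Rx)(Ru)(Rx)=R(xux)$ together with rotation invariance of $dS$ gives the intertwining property; and complex scalar-valued $\Hk$ is irreducible under $SO(m)$ exactly when $m>2$, so Schur's lemma applies to the $\C$-linear map $\Phi$ (this is indeed where the hypothesis $m>2$ enters, and where the $m=2$ failure is visible). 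The evaluation of the scalar is also right: on the sphere $1-2x_1^2-2ix_1x_2=(x_3^2+\cdots+x_m^2)-(x_1+ix_2)^2$, all terms carrying a positive power of $(x_1+ix_2)^2$ vanish by rotation invariance in the $(x_1,x_2)$-plane, and your reduction formula (correctly normalized, as it returns $1$ for $\phi\equiv1$) turns the remainder into a Beta integral equal to $\frac{m-2}{m+2k-2}=a_k$. Compared with the computational route via zonal harmonics and Funk--Hecke, which is closer in spirit to the treatments in the cited literature, your equivariance-plus-one-evaluation argument is shorter and more conceptual, at the modest cost of invoking irreducibility of $\Hk$; as written it is complete.
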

We only provide the details for the first equation of (\ref{one}), similar argument also applies for the second equation. This argument can also be found in the proof of Theorem $7$ in \cite{D}. We rewrite
\be
&&\int\displaylimits_{\partial B_r}\big(E_k,d\sigma_xP_k^+f(x,u)\big)_u\\
&=&\int\displaylimits_{\partial B_r}\big(E_k,d\sigma_xP_k^+f(y,u)\big)_u+\int\displaylimits_{\partial B_r}\big(E_k,d\sigma_xP_k^+[f(x,u)-f(y,u)]\big)_u
\ee
Since the second integral on the right hand side tends to zero as $r$ goes to zero because of the continuity of $f(x,u)$ with respect to the variable $x$, we only need to deal with the first integral. We will need the property that 
$$E_k(x,u,v)=\displaystyle\frac{1}{\omega_ma_k}\displaystyle\frac{x}{||x||^m}Z_k^1(\displaystyle\frac{xux}{||x||^2},v)=\displaystyle\frac{1}{\omega_ma_k}Z_k^1(u,\displaystyle\frac{xvx}{||x||^2})\displaystyle\frac{x}{||x||^m},$$
more details can be found in \cite{D}. Hence, the first integral becomes
\newpage
\be
&&\int\displaylimits_{\partial B_r}\big(E_k,d\sigma_xP_k^+f(y,u)\big)_u\\
&=&\int\displaylimits_{\partial B_r}\int\displaylimits_{\Sm}\frac{1}{\omega_ma_k}Z_k^1(u,\frac{(x-y)v(x-y)}{||x-y||^2})\frac{x-y}{||x-y||^m}n(x)P_k^+f(y,u)dS(u)d\sigma(x)
\ee
where $n(x)$ is the unit outer normal vector and $d\sigma(x)$ is  the scalar measure on $\partial B_r$. Now $n(x)$ here is $\displaystyle\frac{y-x}{||x-y||}$. Hence the previous integral becomes
\be
&&\int\displaylimits_{\partial B_r}\int\displaylimits_{\Sm}\frac{1}{\omega_ma_k}Z_k^1(u,\frac{(x-y)v(x-y)}{||x-y||^2})\frac{x-y}{||x-y||^m}\frac{y-x}{||x-y||}P_k^+f(y,u)dS(u)d\sigma(x)\\
&=&\int\displaylimits_{\partial B_r}\frac{1}{r^{m-1}}\int\displaylimits_{\Sm}\frac{1}{\omega_ma_k}Z_k^1(u,\frac{(x-y)v(x-y)}{||x-y||^2})P_k^+f(y,u)dS(u)d\sigma(x)
\ee
By Lemma \ref{lemmaharmonic}, this integral is qual to
\be
\int\displaylimits_{\Sm}\frac{1}{\omega_ma_k}Z_k^1(u,v)P_k^+f(y,u)dS(u)=P_k^+f(y,v),
\ee
which completes the proof for (\ref{one}). Similar argument applies for the second equation of (\ref{one}) with the help of 
\be
F_k(x,u,v)=\frac{-1}{\omega_{m}a_k}u\frac{x}{||x||^m}Z_{k-1}^1(\frac{xux}{||x||^2},v)v=\frac{-1}{\omega_{m}a_k}uZ_{k-1}^1(u,\frac{xvx}{||x||^2})\frac{x}{||x||^m}v.
\ee
\end{proof}
As an application of the previous theorem, we have a Green type integral formula for the higher spin Laplace operator $\Dtwo$ immediately, when $\Dtwo f(x,u)=0$.
\begin{theorem}\textbf{(Green type integral formula)}\\
Suppose $\Omega$ and $\Omega'$ are defined as in the previous theorem. Let $f(x,u)\in C^{\infty}(\Rm,\Hk)$, $y\in\Omega$ and $\Dtwo f(x,u)=0$, then we have

\be
f(y,v)&=&\int\displaylimits_{\partial\Omega}\big(H_k(x-y,u,v)(P_k^+-\frac{2P_k^-}{m+2k-4}),d\sigma_xR_kP_k^+f(x,u)\big)_u\\
&&+\int\displaylimits_{\partial\Omega}\big(E_k(x-y,u,v),d\sigma_xP_k^+f(x,u)\big)_u\\
&&+\int\displaylimits_{\partial\Omega}\big(H_k(x-y,u,v)(\frac{2P_k^+}{m+2k-4}+\frac{m+2k}{m+2k-4}P_k^-),d\sigma_xQ_kP_k^-f(x,u)\big)_u\\
&&+\int\displaylimits_{\partial\Omega}\big(F_k(x-y,u,v),d\sigma_xP_k^-f(x,u)\big)_u.
\ee
\end{theorem}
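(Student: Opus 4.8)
The plan is to obtain this Green type integral formula as an immediate corollary of the Borel-Pompeiu type formula in Theorem~\ref{BorelPompeiu}. That theorem asserts, for $f(x,u)\in C^{\infty}(\Rm,\Hk)$ and $y\in\Omega$, that the sum of the four boundary integrals over $\partial\Omega$ equals
\[
-\int\displaylimits_{\Omega}\big(H_k(x-y,u,v),\Dtwo f(x,u)\big)_u\,dx^m+f(y,v).
\]
First I would invoke this identity verbatim, noting that the hypotheses on $\Omega$, $\Omega'$, $f$ and $y$ in the statement to be proved are exactly those of Theorem~\ref{BorelPompeiu}. Then, under the additional assumption $\Dtwo f(x,u)=0$, the integrand $\big(H_k(x-y,u,v),\Dtwo f(x,u)\big)_u$ vanishes identically on $\Omega$, so the volume integral on the right-hand side is zero.

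With that term gone, the Borel-Pompeiu identity reduces to the statement that the four boundary integrals add up to $f(y,v)$. Transposing, this is precisely the asserted formula for $f(y,v)$ in terms of the boundary data $R_kP_k^+f$, $P_k^+f$, $Q_kP_k^-f$ and $P_k^-f$ on $\partial\Omega$, paired against $H_k$, $E_k$, $H_k$ and $F_k$ respectively. No further computation is required.

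There is essentially no obstacle here beyond what has already been dealt with in establishing Theorem~\ref{BorelPompeiu}. The only point worth a brief remark is one of bookkeeping: the condition $\Dtwo f=0$ should be understood to hold on a neighborhood of $\overline{\Omega}$ (equivalently on $\Omega'$, in keeping with the $C^1$/$C^\infty$ requirements in the Stokes' theorems of Section~3), so that all the pairings with $H_k$ remain legitimate and the limiting arguments in the proof of Theorem~\ref{BorelPompeiu}, notably the vanishing of $\int_{B_r}\big(H_k,\Dtwo f\big)_u$ as $r\to 0$, stay valid. Granting this, the theorem follows at once.
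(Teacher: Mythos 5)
Your proposal is correct and matches the paper exactly: the paper presents this theorem as an immediate consequence of the Borel-Pompeiu type formula, obtained by setting $\Dtwo f(x,u)=0$ so that the volume integral vanishes. Your added remark about requiring the hypotheses on a neighborhood of $\overline{\Omega}$ is a reasonable clarification but introduces nothing beyond the paper's argument.
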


%%%%%%%%%%%%%%%       Reference   %%%%%%%%%%%%%%%%%%%%%%%

\end{document}